\numberwithin{equation}{section}
\date{}
\DeclareMathAlphabet{\itbf}{OML}{cmm}{b}{it}
\newcommand{\RR}{\mathbb{R}}
\newcommand{\CC}{\mathbb{C}}
\newcommand{\NN}{\mathbb{N}}
\newcommand{\ds}{\displaystyle}
\newcommand{\no}{\nonumber}
\newcommand{\ri}{\rightarrow}
\newcommand{\bS}{\textsl{\textbf{S}}}
\newcommand{\bD}{\textsl{\textbf{D}}}
\newcommand{\q}{\quad}
\newcommand{\SK}{{{\mathscr{K}}}}
\newcommand{\bx}{{\itbf x}}
\newcommand{\x}{{\itbf x}}
\newcommand{\by}{{\itbf y}}
\newcommand{\y}{{\itbf y}}
\newcommand{\e}{\boldsymbol{e}}
\newcommand{\bw}{{\itbf w}}
\newcommand{\w}{{\itbf w}}
\renewcommand{\i}{\mathrm{i}}
\newcommand{\bn}{{\itbf n}}
\newcommand{\n}{{\itbf n}}
\newcommand{\bH}{{\itbf H}}
\newcommand{\bi}{\begin{itemize}}
\newcommand{\ei}{\end{itemize}}
\renewcommand{\i}{\mathrm{i}}
\newcommand{\bE}{{\itbf E}}
\newcommand{\bK}{\itbf K}
\newcommand{\be}{\begin{eqnarray}}
\newcommand{\ee}{\end{eqnarray}}
\newcommand{\ben}{\begin{eqnarray*}}
\newcommand{\een}{\end{eqnarray*}}
\def\ds{\displaystyle}
\newtheorem{thm}{Theorem}[section]
\newtheorem{lem}[thm]{Lemma}
\newtheorem{prop}[thm]{Proposition}
\newcommand{\bea}{\begin{eqnarray*}}
\newcommand{\eea}{\end{eqnarray*}}
\newcommand{\bean}{\begin{eqnarray}}
\newcommand{\eean}{\end{eqnarray}}
\newcommand{\p}{\partial}
\newcommand{\f}{\frac}
\newcommand{\surface}{\p D}
\newcommand{\divs}{\mathop{\mathrm{div}_{\surface}}}
\newcommand{\curl}{\boldsymbol{\mathrm{curl}} \; }
\newcommand{\gradx}{\boldsymbol{\mathrm{grad}}_{\x}  }
\newcommand{\SSS}{{{\mathscr{S}}}}
\newcommand{\tot}{\mathrm{tot}}
\newcommand{\inc}{\mathrm{inc}}
\newcommand{\Mr}{\mathbb{M}}
\newcommand{\Mm}{\boldsymbol{\Mr}}
\newcommand{\Ir}{\mathbb{I}}
\newcommand{\Ima}{\boldsymbol{\Ir}}
\newcommand{\Jr}{\mathbb{J}}
\newcommand{\Jm}{\boldsymbol{\Jr}}
\newcommand{\E}{\boldsymbol{E}}
\renewcommand{\H}{\boldsymbol{H}}
\newcommand{\h}{\boldsymbol{h}}
\newcommand{\zero}{\boldsymbol{0}}
\title{A well-posed surface currents and charges
  system for electromagnetism in dielectric media}
     \author{M. Ganesh}
     \address{Applied Mathematics and Statistics Department, Colorado School of Mines, 1500 Illinois St.,  Golden, CO 80401.}
     \email{mganesh@mines.edu}
	\author{S. C. Hawkins}
	\address{Department of Mathematics and Statistics, Macquarie University, Sydney, NSW 2109, Australia.}
\email{stuart.hawkins@mq.edu.au}
     \author{C. Jeznach}
     \address{Department of Mathematical Sciences, Worcester Polytechnic Institute, Worcester, MA 01609.}
     \email{cjeznach@wpi.edu}
		\author{D. Volkov}
     \address{Department of Mathematical Sciences, Worcester Polytechnic Institute, Worcester, MA 01609.}
     \email{darko@wpi.edu}
		\thanks{D. Volkov is supported by a Simons Collaboration Grant for Mathematicians.}
     \date{October, 2, 2018}
\begin{document}

\begin{abstract}
The  free space Maxwell dielectric problem 
can be reduced to a  system of surface integral equations (SIE). 
A numerical formulation for  the  Maxwell dielectric problem using an 
SIE system presents two key advantages: first,
 the  radiation condition at infinity is exactly satisfied, and second, 
there is no need to artificially define a  truncated  domain.
Consequently,  these SIE systems have generated much interest in physics, electrical engineering,
and mathematics, and many SIE  formulations have been proposed over time.
  In this article we introduce a new SIE 
formulation which is in the desirable operator form identity
plus compact, is well-posed, and remains well-conditioned  as the frequency tends to zero.
The unknowns in the formulation are three dimensional vector fields on the boundary of the dielectric body.
The SIE  studied in this paper is derived from a formulation
developed in earlier work by  some of the authors~\cite{ganesh2014all}. Our initial formulation
utilized linear constraints to obtain a uniquely solvable system for all frequencies. 
The new SIE introduced and analyzed in this article  combines the integral equations from \cite{ganesh2014all} with
new constraints. 
We show that the new system is in the operator form identity plus compact  in a
particular functional space, and we prove   well-posedness  at all frequencies and low-frequency stability of the new SIE.

\end{abstract}
  \maketitle
\section{Introduction}
Understanding the propagation of electromagnetic waves in three dimensional dielectric media is a fundamental 
problem which is central to a plethora of 
 applications~\cite{ammarikangbook, bsu:electromagnetic, chew:book,  colton:integral, coltoninverse, kerker:light, 
mish2006:book, mishchenko:book,  mishchenko:review,  muller:book, nedlec:book, stratton:book}. For certain
simple shaped dielectric bodies, the propagation process can be modeled by analytical 
techniques~\cite{bsu:electromagnetic}. However, in general, 
numerical modeling~\cite{chew:book, mish2006:book} of  the  
Maxwell system~\cite{muller:book, nedlec:book, stratton:book} 
in unbounded  three dimensional dielectric media is required. Direct simulation of the time-harmonic Maxwell system 
(using finite-difference/elements) requires truncation of the unbounded 
medium to a bounded region, and  approximation of the  associated Silver-M\"uller radiation condition.
Fortunately, these two key physical aspects of the Maxwell system can be preserved in numerical
modeling  by first reformulating the model as an equivalent system of surface integral equations (SIE) defined on the boundary of the dielectric body and approximating the SIE system in a finite dimensional space. 

There is a large literature spanning over a century on developing SIE reformulations of 
Maxwell equations~\cite{chew:book,  colton:integral, coltoninverse,  muller:book, nedlec:book, stratton:book}. 
The extensive survey in~\cite{epsgre2009pap} reviews a century  of research on deriving 
and using such SIEs. The key emphasis in the survey in~\cite{epsgre2009pap} is to highlight the lack of 
low-frequency stable SIE reformulation of the Maxwell equations that is also in the desirable second-kind (identity
plus compact operator) form. The SIE formulation in~\cite{epsgre2009pap} requires surface differential equations constraints.
The main focus of our work is to develop a low-frequency stable and resonance-free well-posed system for all frequencies
that involves only surface integral operators, and mathematically establish the robustness of the system with proofs in appropriate
function spaces. For electromagnetism, appropriate function spaces play a crucial role~\cite{costabel2011lect} for simulating  physically correct solutions.

In previous work~\cite{ganesh2014all}
we introduced an original SIE reformulation of the Maxwell dielectric system,
which satisfies  
a few remarkable properties that, when taken together,
 are unique: first, this system is in the classical form identity plus compact. 
Second, it does not suffer from spurious eigenfrequencies. Third, 
it does not suffer from the low-frequency breakdown phenomenon: as the frequency tends to zero, this system remains uniformly well posed (see \cite{YlaTas:dilow} for a simple account 
of the low-frequency breakdown). 
Fourth, the solution to this system is equal to the trace of the electric and magnetic fields
on the dielectric. Fifth, applying numerical methods developed
in~\cite{ghfirst}, we achieved spectral numerical convergence of discrete solutions.
Points one to four were proved in~\cite{ganesh2014all}.
We explained in~\cite{ganesh2014all} that well-posedness
at all frequencies is achieved when our ``square system'' is augmented by a linear surface integral based constraint.
In this article we propose a new square system obtained
by combining our initial  system of SIEs with new constraints derived from
those in~\cite{ganesh2014all}.  We show that this leads to a linear system that enjoys all five properties listed above.
  
There have been (largely unsuccessful) attempts to achieve this goal: 
in  \cite{taskinen2006implementation}, section 6 (see also
\cite{taskinen2006current}), linear constraints were added to ``stabilize''  a system similar to the one in~\cite{ganesh2014all}.
We showed in~\cite{ganesh2014all}
that adding constraints may fail for some particular (but reasonable) values of  the frequency
and dielectric parameters. 
In~\cite{ganesh2014all} we proposed to first multiply the constraint by a
complex parameter $\xi$ and to then add it to the system of linear equations.
We showed in numerical simulations that the use of this parameter can prevent 
the emergence of additional singular cases but we did not provide a formal proof.
In this present article we describe a new way of incorporating the constraints
using the single layer potential for the Laplace equation.
	This adds to our Maxwell integral equation system 
	a compact operator which is self adjoint and coercive on an adequate subspace.
        
	This article is organized as follows: 
	in Section \ref{pdf+reform} after introducing 
	the Maxwell dielectric  system, we recall
	the system of SIEs	derived in~\cite{ganesh2014all}.  
	We then discuss how several authors (including the co-authors of this paper) have attempted 
	to incorporate linear constraints in the initial linear system in such a way to obtain
	an (unconstrained) well-posed integral equation system.
	Using an explicit example where the system derived in \cite{ganesh2014all}
        becomes singular (previously, only numerical evidence was observed)
        we demonstrate that past attempts have not resolved the issue
        completely.
To close this section, we introduce a novel system of integral equations
obtained  by incorporating constraints
to the  system from~\cite{ganesh2014all} by application of  a compact operator which is
self-adjoint and coercive on an appropriate subspace. 
Our main result is the proof of well-posedness for this novel system,
which is stated in Section \ref{main result}.
The proof relies on introducing a particular function space in which
tangential components of 
the unknown fields are required to be more regular than the normal components.
We prove that the system is in the form identity plus compact. The associated operator 
can be expressed in the form $I + M + \xi J$, where $M$ and $J$ are compact and
$J$ is self-adjoint and positive on an appropriate subspace. We show that this operator 
has a continuous inverse for all sufficiently large $\xi$,
and for all $\xi$ outside a discrete set.
This result is proved using a general functional analysis result
which we state and prove in the appendix,
and the properties of the well-known M\"uller system of integral equations for the 
dielectric problem~c. (M\"uller's system is known to be well-posed, but it becomes unbounded 
at low frequency~\cite{YlaTas:dilow}).

\section{Maxwell dielectric model and  stable reformulations}\label{pdf+reform}
We consider the time-harmonic  electromagnetic wave propagation model problem in three dimensional space comprising a  dielectric body
whose geometry is given by a bounded domain $D \subset \RR^3$ with boundary $\p D$.
Let $D^+ = \RR^3 \setminus \overline{D}$, and $D^- = D$ be the exterior and
interior of $D$ respectively, and set $\epsilon^\pm$ and $\mu^\pm$  to be 
the permittivity and permeability constants 
in   $D^\pm$. The interior  permittivity $\epsilon^-$ has positive real part 
and non-negative imaginary part
while $\epsilon^+$, $\mu^+$, and $\mu^-$ are positive.
 Under this physically appropriate mild assumption on the dielectric medium, 
 the time-harmonic Maxwell partial differential equation (PDE) model is resonance-free~\cite[Theorem 2.1]{costabel2011pap}.
We set
\begin{equation}
\label{eq:epsilon-mu}
\epsilon(\x) = \epsilon^\pm, \qquad \mu(\x) = \mu^\pm, \qquad \mbox{for $\x \in D^\pm$}.
\end{equation}

A time-harmonic incident 
electromagnetic field impinging on the dielectric
body $D$ induces an interior field with spatial components $[\E^-,\H^-]$ in $D^-$ and
a scattered field with spatial components $[\E^+,\H^+]$ in $D^+$.
We set
\begin{equation}\label{eq:int_ext_rep}
\E(\x) = \E^\pm(\x), \qquad \H(\x) = \H^\pm(\x), \qquad \qquad  \x \in D^\pm.
\end{equation}
Then $[\E,\H]$
satisfy the time-harmonic Maxwell equations~\cite[Page~253]{nedelec2001acoustic}
\begin{equation}
\label{eq:reduced-maxwell}
\curl \E(\x) - \i \omega \mu(\x) \H(\x)  =  \zero,  \qquad 
\curl \H (\x) + \i\omega \epsilon(\x) \E(\x)  =  \zero,\qquad  \x \in  D^+,D^-,
\end{equation}
and the Silver-M\"uller radiation condition
\begin{equation}
\label{eq:silver-muller}
\lim_{|\x| \ri \infty} \left[\sqrt{\mu^+}  \H(\x) \times \x -\sqrt{\epsilon^+} | \x | \E(\x) \right] = \zero.
\end{equation}
The  incident electromagnetic
field  $[\E_{\inc},\H_{\inc}]$ is
 required to satisfy the Maxwell equations
\begin{equation}
\label{eq:incident-maxwell}
\curl \E_\inc(\x) - \i \omega \mu^+ \H_\inc(\x)  =  \zero,  \quad 
\curl \H_\inc (\x) + \i\omega \epsilon^+ \E_\inc(\x)  =  \zero,\quad  \x \in  \RR^3 \setminus Q,
\end{equation}
where $Q \subset \RR^3$ is a compact or empty set, bounded away from the 
dielectric body $D$. 
In practice, the dielectric body $D$
is typically subject to 
excitation by an incident plane wave or a point source.
It is convenient to define the total field
\begin{equation}\label{eq:total_field}
\E_\tot(\x) = \E_\tot^\pm(\x), \qquad \H_\tot(\x) = \H_\tot^\pm(\x), \qquad \qquad  \x \in D^\pm,
\end{equation}
which 
is related to the incident field and $[\bE, \bH ]$ by 
\begin{equation}
\E_\tot^- = \E^-, \quad \E_\tot^+ = \E^+ + \E_\inc, \quad
\H_\tot^- = \H^-, \quad \H_\tot^+ = \H^+ + \H_\inc.
\end{equation}
We emphasize that inside $D$ the induced field is the total field.

The tangential components of the total electric and magnetic fields
(which are sometimes called the surface currents)
are required to be  continuous across the interface $\surface$, leading
to the interface conditions~\cite[Equation~(5.6.66), Page~234]{nedelec2001acoustic}:
\begin{equation}\label{eq:EH_interface}
 \E^-_\tot \times \n =     \E^+_\tot \times \n,  
\qquad \H^-_\tot \times \n  =    \H^+_\tot \times \n,
\qquad \text{on $\surface$}, 
\end{equation}
where $\n$ is the outward unit normal to $\surface$. 
A consequence of~\eqref{eq:reduced-maxwell},~\eqref{eq:incident-maxwell}, 
and~\eqref{eq:EH_interface}
is that
the normal components of the fields
(which are sometimes called the surface charges) satisfy the interface conditions:
\begin{equation}\label{eq:nor_interface}
\epsilon^- \n \cdot \E_\tot^- =  \epsilon^+ \n \cdot \E_\tot^+, \qquad
\mu^- \n \cdot \H_\tot^- =  \mu^+ \n \cdot \H_\tot^+ \qquad \text{on $\surface$}. 
\end{equation}

\subsection{A reformulated SIE system of the Maxwell model}\label{integral equation ealier}
Denote by
\begin{equation}\label{eq:fund}
G_\pm(\bx,\by)  = \frac{1}{4 \pi} \frac{ e^{\i k_\pm |\bx - \by|} }{|\bx - \by|}
\end{equation}
the free space Green's function for the Helmholtz operator $\Delta + k_\pm^2$, where 
 $k_\pm = \omega \sqrt{\epsilon^\pm \mu^\pm}$ is,  respectively,  the exterior  and
interior wavenumber. 
In \cite{ganesh2014all} we proved that if  the scattered field $\E, \H$ satisfies the Maxwell PDE system 
(\ref{eq:reduced-maxwell})--(\ref{eq:nor_interface})
then  the exterior traces $\e, \h$  of the total fields on $\p D$ satisfy the system of integral
equations, 
  \begin{align}
(\e \times \n)(\x) 
-  \frac{2}{\epsilon^++\epsilon^-} \Big\{
- \int_{\partial D} \left[\epsilon^+ \frac{\partial G_+}{\partial \n(\x)}
- \epsilon^- \frac{\partial G_-}{\partial \n(\x)}\right]  (\e \times \n)(\y)\; ds(\y) \nonumber  \\
+ \int_{\partial D} \left[\gradx (\epsilon^+ G_+ -\epsilon^- G_-)\right]
 (\e \times \n)(\y)  \cdot [\n(\x) - \n(\y)] \; ds(\y) \nonumber \\
- \i \omega     \int_{\partial D} \left[\epsilon^+ \mu^+ G_+ - \epsilon^- \mu^- G_-\right]
 (\h \times \n)(\y) \times \n(\x) \;ds(\y) \nonumber \\
-  \epsilon^+  \int_{\partial D} 
\left[\gradx (G_+ - G_-) \times \n(\x)\right]   (\e \cdot \n)(\y)\;    ds(\y) \Big \} 
= \frac{2 \epsilon^+}{\epsilon^++\epsilon^-}  (\e_{\inc} \times \n)(\x),
\label{eq:eq_1} \\
(\e \cdot \n)(\x) 
-   \frac{2 \epsilon^-}{\epsilon^++\epsilon^-}  \Big \{ - \i \omega  \int_{\partial D} \left[\mu^+G_+-\mu^-G_-\right] 
(\h\times \n)(\y)  \cdot \n(\x)\; ds(\y) \nonumber \\ - 
 \int_{\partial D} \left[\frac{ \partial G_+}{\partial \n(\x)} - \frac{\epsilon^+}{\epsilon^-} \frac{ \partial G_-	}{\partial \n(\x)}\right]
(\e\cdot \n)(\y) \;ds(\y) \nonumber \\
+ \int_{\partial D}   \left[\gradx (G_+ -G_-) \times \n(\x)\right] \cdot (\e \times \n)(\y) \; ds(\y) \nonumber \\
=\frac{2 \epsilon^-}{\epsilon^++\epsilon^-} (\e_{\inc} \cdot \n)(\x), \label{eq:eq_2} \\
(\h \times \n)(\x)  
- \frac{2}{\mu^+ + \mu^-} 
\Big \{   - \int_{\partial D} \left[\mu^+ \frac{\partial G_+}{\partial \n(\x)} 
- \mu^- \frac{\partial G_-}{\partial \n(\x)}\right ] (\h \times \n)(\y)\; ds(\y) \nonumber  \\
+ \int_{\partial D} \left[\gradx (\mu^+ G_+ -\mu^- G_-)\right]
 (\h \times \n)(\y)) \cdot [\n(\x) - \n(\y)] \; ds(\y) \nonumber \\
+ \i \omega     \int_{\partial D} \left[\epsilon^+ \mu^+ G_+ - \epsilon^- \mu^- G_-\right]
 (\e \times \n)(\y)\times \n(\x) \; ds(\y) \nonumber \\
-   \mu^+  \int_{\partial D} 
\left[\gradx (G_+ - G_-) \times \n(\x)\right]  (\h \cdot \n)(\y)\;    ds(\y) \Big \} 
=  \frac{2 \mu^+}{\mu^+ + \mu^-}(\h_{\inc }  \times \n)(\x),  \label{eq:eq_3} \\
 (\h \cdot \n)(\x)   
- \frac{2 \mu^-}{\mu^+ + \mu^-} \Big \{
 \i \omega  \int_{\partial D} \left[\epsilon^+G_+-\epsilon^-G_-\right] (e\times \n)(\y)  \cdot \n(\x) \; ds(\y) 
 \nonumber \\
 - \int_{\partial D} \left[\frac{ \partial G_+}{\partial \n(\x)} 
- \frac{\mu^+}{\mu^-} \frac{ \partial G_-}{\partial \n(\x)}\right]
(\h\cdot \n)(\y) \;ds(\y) \nonumber \\
+ \int_{\partial D}   \left[\gradx (G_+ -G_-) \times \n(\x)\right] \cdot (\h \times \n)(\y) \; ds(\y) 
=  \frac{2 \mu^-}{\mu^+ + \mu^-}  \h_{\inc }\cdot \n(\x),  \label{eq:eq_4}
\end{align}
where we use the abbreviation $G_\pm$  for $G_\pm(\x,\y)$, and 
$(\e_{\inc }, \h_{\inc })$ is the trace of the incident field $(\E_{\inc}, \H_{\inc})$ 
on $\p D$.
It is convenient to write this system in operator form. Setting
\begin{eqnarray}
\e^i   &=& \frac{2\epsilon^+}{\epsilon^+ + \epsilon^-} \n\times  (\e_{\inc } \times \n) + 
\frac{2 \epsilon^-}{\epsilon^+ + \epsilon^-} \n ( \e_{\inc }\cdot \n),  \label{eq:einp}  \\
\h^\i &=& \frac{2\mu^+}{\mu^+ + \mu^-} \n \times  (\h_{\inc } \times \n) + 
\frac{2 \mu^-}{\mu^+ + \mu^-} \n  (\h_{\inc }   \cdot \n), \label{eq:hinp} 
\end{eqnarray} 
the system (\ref{eq:eq_1})--(\ref{eq:eq_4}) is equivalent to 
\bean \label{Msystem}
(\Ima + \Mm ) (\e, \h) = (\e^i, \h^i),
\eean
where $\Ima$ is the identity operator and the explicit form of $\Mm$ can be found in~\cite[Equation~(3.17)]{ganesh2014all} or directly inferred from (\ref{eq:eq_1})--(\ref{eq:eq_4}).

Using the assumption that $\p D$ is smooth, it was explained in  \cite{ganesh2014all} that for any $s \in \RR$,
the operator $\Mm$ is continuous from $H^s(\p D)^3 \times H^s(\p D)^3 $ to 
$H^{s+1}(\p D)^3 \times H^{s+1}(\p D)^3 $. Consequently, the system (\ref{Msystem}) is of the form ``identity plus compact''.
For the mathematical analysis and the operator properties used in this article, it will be convenient to  
assume throughout the article that the  dielectric body $D$ is smooth. 
However, as  in~\cite{ganesh2014all}, most   results in this article will still hold  when $D$ is a $C^{1, \alpha}$ domain.

Many authors have   derived systems of integral equations for the free space dielectric 
problem in this  ``identity plus compact'' form. However they usually suffer from low-frequency 
break down, meaning that the norm of the inverse operator blows up
as the frequency tends to zero~\cite{YlaTas:dilow}.
One of the most interesting contributions of our system 
(\ref{Msystem}) is that $\Ima + \Mm $ is norm 
convergent to an invertible operator as the frequency $\omega$ tends to zero (see \cite[Theorem~B.1]{ganesh2014all}).
However, the operator   $\Ima + \Mm $ may be singular
for some frequencies, as demonstrated in numerical simulations in~\cite{ganesh2014all} and demonstrated analytically in section \ref{explicit example}.

\subsection{Constraints for all-frequency stabilization of the SIE (\ref{Msystem})}
\label{sub sec constraints}
Although the system
(\ref{Msystem})  is always solvable, the solution may not be unique for some frequencies because
$\Ima + \Mm $ may have a nontrivial kernel.
However, under the constraints
  \bean
	\divs(\e \times \n)=i \omega \mu^+ \h \cdot \n, \quad
\divs (\h \times \n) = - i \omega \epsilon^+ \e \cdot  \n,
\label{bad constraint}
\eean
 the solution to (\ref{Msystem})  is unique. This has been known for some time, see \cite[chapter~VI]{muller:book}.
In this work we 
want to find  a constraint in integral equation form, rather than PDE form.
To that end, in~\cite{ganesh2014all} we defined
operators
\begin{eqnarray}
(\SSS_\pm w)(\x) =  \int_{\surface} G_\pm(\x,\y)  \; w(\y) \; ds(\y),  \quad \x \in  \surface, \label{eq:S} \\
(\SK_\pm \w)(\x)  =  \int_{\surface}  \Big[ \gradx G_\pm(\x,\y) \times \n(\y) \Big] \cdot \w(\y) \; ds(\y), 
\quad \x \in \surface,
\label{eq:K} 
\end{eqnarray}
for a scalar function $w$ and a vector field $\bw$ on $\p D$,
and
\bean
J_1 (\e , \h) (\x ) = 
(-\i \omega \epsilon^+ (\SSS_+ - \SSS_-)(\e \cdot \n) + (\SK_+ - \SK_-) \h ), \label{J1} \\
J_2 (\e , \h) (\x ) =  (\i \omega \mu^+ (\SSS_+ - \SSS_-)(\h \cdot \n) + (\SK_+ - \SK_-) \e ) \label{J2}.
\eean
We proved in \cite[Theorem~5.4]{ganesh2014all} that the augmented system
\bean
\label{augmented}
(\Ima + \Mm ) (\e, \h) = (\e^i, \h^i), \q  J_1 (\e , \h)=J_2 (\e , \h)=0,
\eean
has a unique solution
for all frequencies $\omega >0$ and  values of the electromagnetic parameters $\epsilon, \mu$ considered in this paper.

\subsection{Stabilized SIE  derived from the augmented system (\ref{augmented})}
\label{subsec derived systems}
In applications, linear systems
derived from formulation (\ref{augmented}) by discretizing may be very large,
and they may have to be solved a large number of times for different values of the frequency 
or of the electromagnetic parameters.
Consequently, imposing the two constraints from (\ref{augmented}) 
may become expensive from a computational perspective for  high frequencies and
complex geometries. 

There  have been several attempts to find a system of linear integral equations 
which is equivalent to the augmented system (\ref{augmented}). 
 In \cite[Section~6]{taskinen2006implementation} (see also
\cite{taskinen2006current}), adding (arithmetically) conditions (\ref{J1})--(\ref{J2}) to 
(\ref{eq:eq_2}) and (\ref{eq:eq_4})
was shown to be effective in a few examples.
Numerical 
evidence  in~\cite{ganesh2014all} demonstrated that this does not work for all frequencies, even in the simple case 
where $\p D$ is a sphere.
In \cite[Section~7]{ganesh2014all} we demonstrated that adding
a multiple of conditions  (\ref{J1})--(\ref{J2})
to the system (\ref{eq:eq_1})--(\ref{eq:eq_4})
was effective for all frequencies (with the multiple depending on the
frequency).

It is fruitful to recast this idea in terms of general operator theory. Let $H$ be a separable complex Hilbert space and $M$
and $J$ two linear compact operators from $H$ to $H$. Suppose that we know that for a given $b \in H$ the constrained linear equation 
\bean 
(I + M) x= b, \q J x = 0
\label{constrained}
\eean 
is uniquely solvable. It follows that $(I + M + \xi J) x= b $
for any $\xi$ in $\CC$.
Can we tell with certainty that for some $\xi$ in $\CC$ the operator $I + M + \xi J$ is invertible
with bounded inverse?

If $H$ is finite-dimensional, this relates to the so called theory of pencils, \cite{gantmacher2005applications}.
If $H$ is finite- dimensional and $J$ is regular, we write $I + M + \xi J = J (J^{-1} (I+M) + \xi I)$, which is regular, except for finitely many values of 
$\xi$. If $J$ is singular and $H$ is finite- dimensional then, using the determinant, we see that $I + M + \xi J $ is either regular, except for finitely many values of  $\xi$, or it is singular for all values of $\xi$. In the latter case  $I + M + \xi J $ is called a singular pencil.

Beside the case 
where $H$ is finite- dimensional and $J$ is regular,
are there  simple to use conditions on $M$ and $J$ that will guarantee that $I + M + \xi J $ is not a singular pencil?
We note that it is not sufficient that
the intersection of the nullspaces of $I + M$ and $J$ is  trivial.
Here is an example in dimension 3, which can easily be generalized to any higher (including infinite)
dimension. Set 
\begin{equation}
I+ M= \left(
\begin{array}{ccc}
0 & 0 & 1 \\
0 & 0  & 1  \\
1 & 1 & 0
\end{array}
\right), \quad 
J= \left(
\begin{array}{ccc}
1 & 0 & 0 \\
0 & -1 & 0 \\
0 & 0 & 0
\end{array}
\right) \nonumber
\end{equation}
and let $e_1, e_2, e_3$ be the natural basis of $\CC^3$.
Then $e_3$ spans the nullspace of $J$ and is  
not in the nullspace of $I +M$.  Clearly, $ (I + M + \xi J)(e_1 -  e_2 - \xi e_3) =0 $, for all $\xi \in \CC$. 

We prove in Theorem~\ref{all symmetric} in the appendix that  a sufficient condition for the pencil 
$I + M + \xi J$ to be regular is to satisfy (i) $M$ and $J$ are compact;
(ii)  the intersection of the nullspaces of $I + M$ and $J$ is trivial;
(iii)  $J$ is self-adjoint and  non-negative; and (iv) $N(J)$ is invariant under $M$. 
These strong conditions on $J$ and on $M$ are required.
In Proposition \ref{injective counter example} in the appendix we provide
an example, valid in any separable Hilbert space, of two compact operators $J$ and $M$ 
such that $J$ is injective and yet  $I + M + \xi J$ has a non-trivial nullspace, for all $\xi \in \CC$.
The following proposition 
explains the  the numerical results in  \cite[Section~7]{ganesh2014all},
particularly the isolated peak in \cite[Figure~7.2]{ganesh2014all}. 

\begin{prop}\label{prop discrete}
Let $H$ be a separable Hilbert space and   $M$ and $J$ two compact linear operators from $H$ to $H$.
Then $ I + M + \xi J$ is either singular for all values $\xi  \in \CC$ or there is a discrete set $Z$ such that 
$ I + M + \xi J$  is invertible for $\xi \in \CC \setminus Z$.
\end{prop}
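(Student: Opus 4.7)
The plan is to recognize Proposition~\ref{prop discrete} as a direct application of the analytic Fredholm theorem to the affine operator pencil $T(\xi) := I + M + \xi J$ viewed as a holomorphic family on $\CC$.

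First, I would observe that for each $\xi \in \CC$ the operator $M + \xi J$ is a finite linear combination of compact operators and is therefore compact. Consequently $T(\xi) = I + (M + \xi J)$ is a compact perturbation of the identity, hence a Fredholm operator of index zero, and by Fredholm alternative $T(\xi)$ is boundedly invertible if and only if it is injective. Second, I would check that the map $\xi \mapsto M + \xi J$ is holomorphic from $\CC$ into the ideal of compact operators on $H$: it is in fact a polynomial of degree one in $\xi$ with coefficients in the Banach algebra of bounded operators, and its range lies in the closed subspace of compact operators because both $M$ and $J$ are compact. In particular, the dependence is norm-analytic.

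With these two observations in place, I would invoke the analytic Fredholm theorem (see, e.g., Reed--Simon or Colton--Kress): if $U \subset \CC$ is a connected open set and $\xi \mapsto K(\xi)$ is a norm-analytic, compact-operator-valued function on $U$, then either $(I - K(\xi))^{-1}$ fails to exist for every $\xi \in U$, or else $(I - K(\xi))^{-1}$ exists for every $\xi$ in $U \setminus Z$, where $Z$ is a discrete subset of $U$, and in the latter case $\xi \mapsto (I - K(\xi))^{-1}$ is meromorphic on $U$. Taking $U = \CC$ and $K(\xi) = -(M + \xi J)$ yields exactly the stated dichotomy: either $T(\xi)$ is singular for all $\xi \in \CC$, or the set $Z$ of singular values is discrete in $\CC$ and $T(\xi)$ has a bounded inverse on $\CC \setminus Z$.

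The main content of the proposition is therefore not a new argument but the verification that the standard hypotheses of the analytic Fredholm theorem hold; there is no real obstacle, since norm-analyticity of an affine function is immediate and compactness is closed under finite linear combinations. I would however take care to remark that discreteness is to be understood as a subset of $\CC$ (no accumulation points in the whole plane), which is the conclusion one gets from applying the theorem on the connected domain $U = \CC$; this is the form used subsequently to interpret the isolated singular values observed numerically in~\cite[Figure~7.2]{ganesh2014all}.
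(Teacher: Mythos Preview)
Your proof is correct, but it is not the route the paper takes. You invoke the analytic Fredholm theorem for the holomorphic compact-operator-valued family $\xi \mapsto -(M+\xi J)$, which immediately gives the dichotomy. The paper instead exploits the affine structure directly: assuming some $\xi_0$ exists with $I+M+\xi_0 J$ invertible, it factors
\[
I+M+\xi J = (I+M+\xi_0 J)\bigl(I + (\xi-\xi_0)\,(I+M+\xi_0 J)^{-1} J\bigr),
\]
and observes that $(I+M+\xi_0 J)^{-1} J$ is a fixed compact operator, so the second factor fails to be invertible only for $\xi-\xi_0$ in the reciprocal spectrum of a compact operator, which is discrete by Riesz--Schauder theory. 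The paper's argument is thus more elementary---it only needs the spectral theory of a single compact operator rather than the full analytic Fredholm machinery---while your approach has the advantage of applying verbatim if the pencil were replaced by any norm-analytic compact family, not just an affine one. Both are perfectly adequate here.
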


\begin{proof}
Supose there is a $\xi_0  \in \CC$ such that
 $ I + M + \xi_0 J$  is invertible. Write
\bea
I + M + \xi J = (I + M + \xi_0 J) ( I +  (\xi - \xi_0 ) (I + M + \xi_0 J)^{-1} J ).
\eea
We then notice  that $(I + M + \xi_0 J)^{-1} J$ is a compact operator, so according to 
basic functional analysis theory, $I +  (\xi - \xi_0 ) (I + M + \xi_0 J)^{-1} J$ is invertible except
possibly for a $\xi - \xi_0 $ in a discrete subset of $\CC$.
\end{proof}

\subsection{An explicit sphere case  where equations (\ref{eq:eq_1}-\ref{eq:eq_4}) are singular}
\label{explicit example}
To find a simple case where  equations (\ref{eq:eq_1})--(\ref{eq:eq_4}) are singular, we 
set $\e \times \n$ and $\h \times \n$ to be  zero in
(\ref{eq:eq_1})--(\ref{eq:eq_4})
to obtain
 the reduced homogeneous  system  
 \begin{align}
   \allowdisplaybreaks
  \int_{\partial D} 
\left[\gradx (G_+ - G_-) \times \n(\x)\right]   (\e \cdot \n)(\y)\;    ds(\y) 
= \zero,
\label{reduced1} \\
(\e \cdot \n)(\x) 
+  \frac{2 \epsilon^-}{\epsilon^++\epsilon^-}  
 \int_{\partial D} \left[\frac{ \partial G_+}{\partial \n(\x)} - \frac{\epsilon^+}{\epsilon^-} \frac{ \partial G_-	}{\partial \n(\x)}\right]
(\e\cdot \n)(\y) \;ds(\y) 
=0, \label{reduced2} \\
\int_{\partial D} 
\left[\gradx (G_+ - G_-) \times \n(\x)\right]  (\h \cdot \n)(\y)\;    ds(\y) 
=  \zero  \label{reduced3} \\
 (\h \cdot \n)(\x)   
+\frac{2 \mu^-}{\mu^+ + \mu^-} 
  \int_{\partial D} \left[\frac{ \partial G_+}{\partial \n(\x)} 
- \frac{\mu^+}{\mu^-} \frac{ \partial G_-}{\partial \n(\x)}\right]
(\h\cdot \n)(\y) \;ds(\y) 
= 0.  \label{reduced4}
\end{align}
Our example uses the following technical lemma.
\begin{lem}\label{sphere}
For any complex number $k$, and $\x \in \RR^3$,
let $v(\x) =\ds \int_S \f{1}{4 \pi} \f{e^{i k |\x-\y|}}{|\x-\y|} d s(\y)$,
where $S$ denotes the unit sphere in $\RR^3$ centered at the origin. Let $r =|\x|$.
Then
$$
v(\x)
 = \left\{  
\begin{array}{lll} \ds
\f{e^{ikr}}{r} \f{\sin k }{k},& \qquad & \mbox{ if } r \geq 1, \\
\ds e^{ik} \f{\sin k }{k},&& \mbox{ if } r = 1, \\
\ds \f{\sin kr }{r} \f{e^{ik}}{k},&& \mbox{ if } r \leq 1.
\end{array}
\right.
$$
\end{lem}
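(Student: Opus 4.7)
The plan is to reduce the surface integral to a one-dimensional integral using the rotational symmetry of the sphere, and then evaluate it by elementary means.

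First I would exploit the $SO(3)$-invariance of the surface measure on $S$: since $v(\x)$ depends only on $r = |\x|$, I may assume without loss of generality that $\x = r\,\e_3$ lies on the positive polar axis. Parametrize $\y \in S$ in spherical coordinates $(\theta,\phi) \in [0,\pi]\times[0,2\pi)$ so that $\y\cdot\e_3 = \cos\theta$ and $ds(\y) = \sin\theta\,d\theta\,d\phi$. Then
\begin{equation*}
|\x-\y|^2 = r^2 - 2r\cos\theta + 1,
\end{equation*}
and the integrand is independent of $\phi$, so the $\phi$-integration contributes a factor $2\pi$.

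Next I would make the substitution $u = |\x-\y| = \sqrt{r^2 - 2r\cos\theta + 1}$, which, for $r > 0$, is a smooth increasing function of $\theta$ on $(0,\pi)$ with $u\,du = r\sin\theta\,d\theta$. The endpoints become $u(0) = |r-1|$ and $u(\pi) = r+1$. This collapses the surface integral to
\begin{equation*}
v(\x) = \frac{1}{2}\int_0^\pi \frac{e^{ik u(\theta)}}{u(\theta)}\sin\theta\,d\theta = \frac{1}{2r}\int_{|r-1|}^{r+1} e^{iku}\,du = \frac{1}{2ikr}\bigl(e^{ik(r+1)} - e^{ik|r-1|}\bigr).
\end{equation*}

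Finally I would split on the sign of $r-1$. For $r > 1$, $|r-1| = r-1$, and factoring out $e^{ikr}$ gives $\frac{e^{ikr}}{2ikr}(e^{ik}-e^{-ik}) = \frac{e^{ikr}}{r}\,\frac{\sin k}{k}$. For $0 < r < 1$, $|r-1| = 1-r$, and factoring out $e^{ik}$ gives $\frac{e^{ik}}{2ikr}(e^{ikr}-e^{-ikr}) = \frac{\sin kr}{r}\,\frac{e^{ik}}{k}$. The value at $r = 1$ is $e^{ik}\,\frac{\sin k}{k}$, obtained either as the common limit of the two formulas or directly from the substitution (noting the two formulas agree on $r=1$). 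The remaining edge case $r = 0$ follows either by taking the limit $r \to 0^+$ of the $r \leq 1$ formula or by direct evaluation, since $|\x-\y| \equiv 1$ on $S$ gives $v(0) = e^{ik}$.

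The only mild subtlety is the degeneracy of the substitution at $r = 0$ (where $u$ is constant in $\theta$); this is handled by the direct evaluation just mentioned, or equivalently by continuity since the integrand depends continuously on $r$ on bounded regions away from the sphere. No step is genuinely hard; the computation is a textbook exercise in spherical symmetry, but care must be taken to identify the correct sign of $r-1$ when simplifying $e^{ik|r-1|}$, which is exactly what produces the three-case structure of the statement.
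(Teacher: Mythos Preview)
Your proof is correct, but it takes a different route from the paper's. After establishing radiality (which you both do the same way), the paper argues via PDE theory: it observes that $v$ solves the Helmholtz equation $(\Delta+k^2)v=0$ away from $S$, reduces to the radial ODE $f''+k^2 f=0$ for $f(r)=r\tilde v(r)$, and then pins down the constants using the value $\tilde v(0)=e^{ik}$, continuity of $v$ across $S$, and the single-layer normal-derivative jump $(\partial_r v)^+-(\partial_r v)^-=-1$. Your direct-integration approach (the substitution $u=|\x-\y|$, the classical trick for potentials of a spherical shell) is more elementary and entirely self-contained, needing no potential-theoretic jump relations; the paper's approach, by contrast, explains structurally why the answer must be a combination of $e^{\pm ikr}/r$ and ties the lemma to the layer-potential machinery used elsewhere in the article. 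One small omission in your write-up: the antiderivative $\tfrac{1}{ik}e^{iku}$ presumes $k\neq 0$; the case $k=0$ (where the stated formulas are interpreted via $\tfrac{\sin k}{k}\to 1$) follows either by evaluating $\tfrac{1}{2r}\int_{|r-1|}^{r+1}du$ directly or by continuity in $k$.
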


\begin{proof}
Let $R$ be a rotation of $\RR^3$ with axis passing through the origin.
Then
\bea
v(R \x ) = \ds \int_S \f{1}{4 \pi} \f{e^{i k |R\x-\y|}}{|R\x-\y|} d s(\y) 
= \int_S \f{1}{4 \pi} \f{e^{i k |\x- R^T\y|}}{|\x- R^T\y|} d s(\y) = v(\x).
\eea
Thus $v $ is a radial function and we can write $v(\bx) = \tilde{v} (r)$. Away from $S$ and the origin, $v$ satisfies the differential equation
$ (\Delta + k^2) v =0$, thus $\tilde{v}$ satisfies for $r$ in $(0,1)$ and $r$ in $(1,\infty)$, 
\bea 
\p_r^2 \tilde{v} + 2 r^{-1} \p_r \tilde{v}+ k^2 v =0 . 
\eea
If we set $f(r)=r\tilde{v} (r)$, $f$ satisfies  
$\p_r^2 f + k^2 f=0$, thus $f(r)$ must be a linear combination of $e^{ikr}$ and $e^{-ikr}$.
It follows that $\tilde{v} (r)$ is  a linear combination of $\f{e^{ikr}}{r}$ 
and $\f{e^{-ikr}}{r}$ in $(0,1)$ and in $(1,\infty)$.
Since $\tilde{v}$ is continuous at zero, $\tilde{v}(r)$ must be a multiple of $\f{\sin kr }{r}$ for
$r$ in $[0,1]$. 
From the definition of $v$ we have  $\tilde{v}(0) = v((0,0,0))= e^{ik}$,  
thus  $\tilde{v}(r) = \ds \f{\sin kr }{r} \f{e^{ik}}{k} , \mbox{ if }  0 \leq r  \leq  1$. 

Next we use the fact that $v$ is continuous across $S$ and that the normal derivative of $v$
satisfies the jump condition $(\p_r v)^+ - (\p_r v)^- =-1$, to find the expression of $v$ as function
of $r$ outside $S$.
\end{proof}

We now use Lemma~\ref{sphere} to construct a solution
of~(\ref{reduced1})--(\ref{reduced4}).
First we define
$$
w(\x) = \left\{  
\begin{array}{lll} \ds
\f{e^{ik_- r}}{r} \f{\sin k_-  }{k_-},& \qquad & \mbox{ if } r >  1, \\
\ds \f{\sin k_+ r }{r} \f{e^{ik_+}}{k_+}, && \mbox{ if } r <  1.
\end{array}
\right.
$$
Across $S$ we see that
\bean \label{eps k eq}
 \epsilon^+(\partial_r w)^+ - \epsilon^- (\partial_r w)^-
= \epsilon^+ \sin  k_-   e^{i k_-}  \left(i- \f{1}{k_-} \right)
- \epsilon^-  e^{i k_+ } \left( \cos k_+  - \f{\sin k_+ }{k_+ }\right).
\eean 
There are infinitely many values of  $\epsilon^+, \epsilon^-, k_+, k_- $
for which
the right hand side of~(\ref{eps k eq}) is zero.
These values can be found numerically.
For example, when
$\epsilon^+=1, \epsilon^-=6 $
one can find a zero of~(\ref{eps k eq}) at
\bea
k_+ = 0.763 452 368 18~\mbox{(to 11 d.p.)}
\quad k_- = 1.835 368 158 62~\mbox{(to 11 d.p.)} 
\eea
For such zeros of~(\ref{eps k eq}), if $\e \cdot \n =1, \, \h \cdot \n =0$
then (\ref{reduced3}) and (\ref{reduced4})  are trivially satisfied. 
Using Lemma~\ref{sphere}  we obtain
$$
w(\x) = \left\{  
\begin{array}{lll} \ds
\int_S \f{1}{4 \pi} \f{e^{i k_- |\x-\y|}}{|\x-\y|} ds(\y), & \qquad & \mbox{ if } r >  1, \\
\ds \int_S \f{1}{4 \pi} \f{e^{i k_+ |\x-\y|}}{|\x-\y|} ds(\y), & & \mbox{ if } r <  1.
\end{array}
\right.
$$
Consequently, using~(\ref{eps k eq}) 
\bean
1
+  \frac{2 \epsilon^-}{\epsilon^++\epsilon^-}  
 \int_{S} \left[\frac{ \partial G_+}{\partial \n(\x)} - \frac{\epsilon^+}{\epsilon^-} \frac{ \partial G_-	}{\partial \n(\x)}\right]
 \;ds(\y) \no \\
= -  \frac{2 \epsilon^-}{\epsilon^++\epsilon^-}   (\epsilon^+ \sin  k_-   e^{i k_-}  (i- \f{1}{k_-} )
- \epsilon^-  e^{i k_+ } ( \cos k_+  - \f{\sin k_+ }{k_+ }) ),  \label{eps k eq2}
\eean
and hence (\ref{reduced2}) is  satisfied for the above specific choice of dielectric parameters and 
$\e \cdot \n =1$. Further,  for $\x \in S$,
\begin{displaymath}
  \int_S (G_+ - G_-) \, 1 \, ds(\y)
  = (w)^- - (w)^+,
\end{displaymath}
which is constant  due to Lemma \ref{sphere},  so that its surface gradient is zero
on $S$, and thus (\ref{reduced1}) is also satisfied.

In conclusion if $\p D = S$
we have found that 
 the system (\ref{eq:eq_1}-\ref{eq:eq_4}) is singular for these values of 
$\epsilon^+, \epsilon^-, k_+, k_- $.

\subsection{A new way of incorporating  constraints (\ref{bad constraint}) into  (\ref{eq:eq_1}--\ref{eq:eq_4})}
\label{new way}
In order to established a well-posed SIE system for all frequencies, we consider a new approach consisting
of  incorporating  the constraints (\ref{bad constraint}) in surface integral form. 
We recall
the single layer potential for the Laplacian
\bean
\bS w(\x) = \f{1}{4 \pi }\int_{\p D} \f{1}{|\x - \y|} w(\y) \, ds(\y)
\label{Sdef}
 \eean 
and the associated gradient vector potential
\bean 
\bD \w(\x) = \f{1}{4 \pi }\int_{\surface}  \left[ \gradx \left(\f{1}{ |\x - \y|} \right) \times \n(\y) \right] \cdot \w(\y) \; ds(\y).
\label{Ddef}
\eean
The operator $\bS$ has the following
properties~\cite{ammarikangbook, nedelec2001acoustic}.
\begin{prop} \label{prop S}
  For any $s \in \RR$, the operator $\bS: H^s(\p D) \to H^{s+1}(\p D)$ is continuous  and invertible with continuous inverse.
  Furthermore, the operator
  $\bS: L^2 (\p D) \to L^2 (\p D)$ is  self  adjoint and coercive.
\end{prop}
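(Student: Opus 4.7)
\medskip

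\noindent\textbf{Proof plan.} The plan is to treat the four claims (continuity, invertibility, self--adjointness, coercivity) separately and lean heavily on classical potential theory for the Laplace equation, exactly in the spirit of N\'ed\'elec and Ammari--Kang which are already cited. Since $\p D$ is smooth, the kernel $\tfrac{1}{4\pi|\x-\y|}$ is the fundamental solution of $-\Delta$ in $\RR^3$ restricted to a smooth compact manifold without boundary, so $\bS$ is a classical pseudodifferential operator on $\p D$ of order $-1$ with scalar positive principal symbol (proportional to $|\xi|^{-1}$ in local cotangent coordinates). Continuity $\bS:H^s(\p D)\to H^{s+1}(\p D)$ for every $s\in\RR$ then follows from the standard mapping properties of pseudodifferential operators on compact manifolds, and ellipticity gives that $\bS$ is Fredholm of index zero between these Sobolev spaces.

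For invertibility, I would reduce everything to injectivity at a single Sobolev level and use Fredholm index zero to upgrade to bijectivity on every level. Injectivity is the classical potential--theoretic argument: given $w$ with $\bS w=0$, define
\[
u(\x)=\tfrac{1}{4\pi}\int_{\p D}\tfrac{1}{|\x-\y|}\,w(\y)\,ds(\y),\qquad \x\in\RR^3\setminus\p D.
\]
Then $u$ is harmonic in $D$ and in $\RR^3\setminus\overline{D}$, decays like $1/|\x|$ at infinity (this is where the 3D case is clean), has no jump across $\p D$, and its trace on $\p D$ equals $\bS w=0$. Uniqueness of the interior and exterior Dirichlet problems then forces $u\equiv 0$ on both sides, so the jump relation $w=(\p_{\n}u)^- - (\p_{\n}u)^+$ yields $w=0$. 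Combining injectivity with the Fredholm index--zero property produces a continuous inverse on every Sobolev scale via the open mapping theorem.

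Self--adjointness on $L^2(\p D)$ is immediate from the fact that the kernel is real, symmetric under $\x\leftrightarrow\y$, and weakly singular (so $\bS$ is a bounded, in fact compact, operator from $L^2$ into $L^2$). For coercivity (positivity), I would use the Green--type energy identity
\[
\langle \bS w,\,w\rangle \;=\; \int_{\RR^3}|\nabla u|^2\,d\x,
\]
obtained by integration by parts in $D$ and in $\RR^3\setminus\overline{D}$ (the boundary terms at infinity vanish because $u=O(1/|\x|)$ and $\nabla u=O(1/|\x|^2)$ in 3D, and the jump of $\p_{\n}u$ across $\p D$ reproduces $-w$). The right--hand side is nonnegative and, by the injectivity argument above, vanishes only when $w=0$. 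The main obstacle I anticipate is calibrating the precise sense of ``coercivity'': the natural coercivity here is on $H^{-1/2}(\p D)$, namely $\langle \bS w,w\rangle \gtrsim \|w\|_{H^{-1/2}}^2$, rather than a genuine $L^2$--coercivity (which would contradict compactness of $\bS$ on $L^2$). I would therefore interpret the stated coercivity as strict positivity of the quadratic form $w\mapsto\langle\bS w,w\rangle$ on $L^2(\p D)$ and as Gelfand--triple coercivity in the $H^{-1/2}$ norm, both of which follow from the energy identity combined with injectivity.
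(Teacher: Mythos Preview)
Your proof plan is correct and essentially reproduces the classical argument found in the references the paper cites. Note, however, that the paper does not actually prove this proposition: it simply states the result and attributes it to N\'ed\'elec and Ammari--Kang. So there is no ``paper's own proof'' to compare against here; what you have written is precisely the standard route taken in those sources (pseudodifferential mapping properties for continuity, Fredholm index zero plus the harmonic-extension injectivity argument for invertibility, kernel symmetry for self-adjointness, and the energy identity $\langle \bS w,w\rangle = \int_{\RR^3}|\nabla u|^2$ for positivity).

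Your caveat about the word ``coercive'' is well taken and worth keeping. A genuine lower bound $\langle \bS w,w\rangle \geq c\|w\|_{L^2}^2$ is impossible since $\bS$ is compact on $L^2$. If you look at how the paper actually uses Proposition~\ref{prop S} downstream (in Theorem~\ref{norm inv lemma} and in the proof of Theorem~\ref{maintheorem}), all that is ever invoked is strict positivity $\langle \bS w,w\rangle > 0$ for $w\neq 0$, together with the consequence that on any finite-dimensional subspace one has a bound $\langle \bS P_m x, P_m x\rangle \geq C_m\|P_m x\|^2$. Your interpretation --- strict positivity on $L^2$, with the sharper $H^{-1/2}$ coercivity available if needed --- is exactly the right one, and your energy-identity argument delivers it.
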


\begin{lem}
Let $\e, \h \in L^2(\p D)^3$.
The constraints~(\ref{bad constraint}) are equivalent to 
\bean
(-\i \omega \epsilon^+ \bS(\e \cdot \n) + \bD \h) & = & 0, \label{new cs 1} \\
  (\i \omega \mu^+\bS(\h \cdot \n) + \bD  \e ) & = & 0.\label{new cs 2}
\eean
\end{lem}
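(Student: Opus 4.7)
The plan is to reduce both identities (\ref{new cs 1})--(\ref{new cs 2}) to the PDE constraints (\ref{bad constraint}) by proving the single key identity
\begin{equation*}
\bD \w = -\,\bS\,\divs(\w \times \n),
\end{equation*}
valid for any $\w \in L^2(\p D)^3$ (where $\divs(\w \times \n)$ is understood in the sense of distributions on $\p D$). Once this identity is in hand, (\ref{new cs 1}) becomes $-\bS\bigl(\i\omega \epsilon^+(\e\cdot\n) + \divs(\h \times \n)\bigr) = 0$ and (\ref{new cs 2}) becomes $\bS\bigl(\i\omega \mu^+(\h\cdot\n) - \divs(\e \times \n)\bigr) = 0$, and the equivalence with (\ref{bad constraint}) follows immediately from the invertibility of $\bS$ between the appropriate Sobolev scales, guaranteed by Proposition~\ref{prop S}.

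\textbf{Deriving the key identity.} First I would rewrite the integrand of (\ref{Ddef}) using the cyclic property of the scalar triple product,
\begin{equation*}
[\gradx(1/|\x-\y|) \times \n(\y)] \cdot \w(\y) = \gradx(1/|\x-\y|) \cdot [\n(\y) \times \w(\y)].
\end{equation*}
Next, using $\gradx(1/|\x-\y|) = -\grady(1/|\x-\y|)$ and the fact that $\n(\y)\times\w(\y)$ is tangential to $\p D$, the $\grady$ may be replaced by the surface gradient $\grads_y$. Integration by parts on the closed surface $\p D$ (in distributional form) then yields
\begin{equation*}
\bD \w(\x) = \frac{1}{4\pi}\int_{\p D} \frac{1}{|\x-\y|}\,\divs\bigl(\n \times \w\bigr)(\y)\,ds(\y) = -\,\bS\,\divs(\w\times\n)(\x),
\end{equation*}
where the final step uses $\n \times \w = -\,\w \times \n$.

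\textbf{Handling the regularity.} Since $\e,\h \in L^2(\p D)^3$, the normal components $\e\cdot\n$ and $\h\cdot\n$ lie in $L^2(\p D)$, and the tangential fields $\e\times\n$ and $\h\times\n$ lie in $L^2(\p D)^3$, so their surface divergences exist a priori as elements of $H^{-1}(\p D)$. By Proposition~\ref{prop S} applied with $s=-1$, the operator $\bS\colon H^{-1}(\p D)\to L^2(\p D)$ is continuous and invertible, so the identity $\bS(\alpha) = 0$ implies $\alpha = 0$ in $H^{-1}$, which matches the function class in which the constraints (\ref{bad constraint}) must hold.

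The only delicate point I anticipate is justifying the surface integration by parts for an $L^2$ density; since $\p D$ is smooth and closed (no boundary terms arise), this is standard duality between $\grads$ and $-\divs$ on $H^{1}(\p D)$ versus $H^{-1}(\p D)^3$, but it should be stated explicitly for the benefit of the reader. Everything else is a direct consequence of Proposition~\ref{prop S}.
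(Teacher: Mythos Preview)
Your proposal is correct and follows exactly the same approach as the paper: the paper's proof simply says ``Applying Green's theorem on the closed surface $\p D$'' to rewrite (\ref{new cs 1})--(\ref{new cs 2}) as $\bS$ applied to the quantities in (\ref{bad constraint}), and then invokes Proposition~\ref{prop S}. Your key identity $\bD\w=-\bS\,\divs(\w\times\n)$ is precisely the content of that Green's-theorem step, and your additional remarks on the $H^{-1}$ regularity and the duality justifying the integration by parts fill in details the paper leaves implicit.
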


\begin{proof}
Applying Green's theorem on the closed
surface $\p D$ establishes that~(\ref{new cs 1})--(\ref{new cs 2})
are equivalent to
\begin{align*}
  \begin{split}
\bS (\divs(\e \times \n)- i \omega \mu^+ \h \cdot \n) & = 0,\\
\bS (\divs (\h \times \n) + i \omega \epsilon^+ \e \cdot  \n) & =0.
  \end{split}
\end{align*}
The result then follows 
by Proposition \ref{prop S}.
\end{proof}

We now define the operator $\Jm$ by setting
\bean
\Jm(\e , \h) (\x ) = 
\Big( \n(\x)   (\omega^2 \epsilon^+ \bS(\e \cdot \n) + i \omega \bD \h),  \n(\x)
 ( \omega^2 \mu^+\bS(\h \cdot \n) - i \omega \bD  \e )
\Big). \label{defJ}
\eean
The constrained problem (\ref{augmented}) implies that for any
$\xi \in \CC$,
\bean
\label{new eq}
(\Ima + \Mm + \xi \Jm) (\e, \h) = (\e^i, \h^i).
\eean
This formulation does not suffer from 
low-frequency breakdown. Indeed,
$\Ima + \Mm $ converges in   operator norm 
to an invertible linear operator as the frequency $\omega \to 0$
~\cite[Appendix~B]{ganesh2014all}
and clearly $\Jm$ converges in   operator norm to zero as
$\omega \to 0$.

\section{Unique solvability of The SIE system  (\ref{new eq})} \label{main result}
To prove that equation (\ref{new eq}) is well-posed (for a range of $\xi$ that we specify later) 
we introduce the space
\bea
 X =\{ (\e ,\h): \e\times \n,  \h \times \n \, \in H^1(\p D)^3, \  \e\cdot \n,  \h \cdot \n \, \in L^2(\p D)
\}
\eea
with norm $\| \, \|_X$ defined by
\bea
\| (\e,\h) \|^2_X = 
  \|\e\times \n\|_{H^1(\p D)^3}^2 +  \|\h \times \n \|_{H^1(\p D)^3}^2 +   \|\e\cdot \n
	\|_{L^2(\p D)}^2 + \|\h \cdot \n\|_{L^2(\p D)}^2.
\eea
and  associated induced inner product from each term. 
It is clear that $X$ is a Hilbert space.

\begin{prop} \label{M is compact}
The linear operator $\Mm: X \to X$ is compact.
\end{prop}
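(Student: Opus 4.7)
The plan is to exhibit a Hilbert space $Y$ that embeds compactly into $X$ and into which $\Mm$ maps $X$ continuously. A natural candidate, suggested by counting derivatives in the kernels of $\Mm$, is
\begin{equation*}
Y = \{(\F,\G) : \F\times\n,\,\G\times\n \in H^2(\p D)^3,\ \F\cdot\n,\,\G\cdot\n \in H^1(\p D)\},
\end{equation*}
equipped with the obvious graph norm. Since $\p D$ is a compact smooth surface, Rellich's theorem gives that the inclusions $H^2(\p D)^3 \hookrightarrow H^1(\p D)^3$ and $H^1(\p D) \hookrightarrow L^2(\p D)$ are compact, so $Y\hookrightarrow X$ is compact.

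It thus suffices to show $\Mm:X\to Y$ is bounded. For this I sort the scalar kernels appearing in (\ref{eq:eq_1})--(\ref{eq:eq_4}) into two classes. \emph{Class A} consists of the weakly singular Helmholtz layer-type kernels $G_\pm$, $\p G_\pm/\p\n(\x)$, $\gradx G_\pm \cdot(\n(\x)-\n(\y))$, together with the coefficient combinations $\mu^+G_+-\mu^-G_-$, $\epsilon^+\mu^+G_+-\epsilon^-\mu^-G_-$, $\p G_+/\p\n(\x)-(\epsilon^+/\epsilon^-)\p G_-/\p\n(\x)$ and their symmetric counterparts with the roles of $\epsilon$ and $\mu$ swapped. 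On a smooth closed surface each such kernel defines a pseudodifferential operator of order $-1$, and therefore maps $H^s(\p D)\to H^{s+1}(\p D)$ continuously for every $s\in\RR$; the gain of one derivative in the kernels involving $\gradx G_\pm$ and $\p G_\pm/\p\n(\x)$ is the familiar consequence of the cancellation $\n(\x)-\n(\y)=O(|\x-\y|)$ on a smooth surface. \emph{Class B} consists of the kernels built only from the difference $G_+-G_-$, namely $G_+-G_-$ itself and $\gradx (G_+-G_-)\times\n(\x)$. Since $G_+$ and $G_-$ share the same Laplace singularity $\tfrac{1}{4\pi|\x-\y|}$, the difference is a $C^\infty$ function of $(\x,\y)$, and the corresponding integral operators are smoothing of infinite order: they map $H^s(\p D)\to H^t(\p D)$ for all $s,t\in\RR$.

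A direct inspection of (\ref{eq:eq_1})--(\ref{eq:eq_4}) now shows that in the two tangential equations, the tangential inputs $\e\times\n,\,\h\times\n\in H^1(\p D)^3$ always enter through Class A kernels (giving $H^2$ output) while the normal inputs $\e\cdot\n,\,\h\cdot\n\in L^2(\p D)$ always enter through Class B kernels (giving $C^\infty$ output); consequently the tangential part of $\Mm(\e,\h)$ sits in $H^2(\p D)^3$. Symmetrically, in the two normal equations the normal input enters through a Class A kernel (gaining one derivative, from $L^2$ to $H^1$), while the tangential inputs enter either through Class A kernels acting on $H^1$ data (giving $H^2$) or through Class B kernels (giving $C^\infty$); consequently the normal part of $\Mm(\e,\h)$ sits in $H^1(\p D)$. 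Combining these bounds gives $\Mm:X\to Y$ continuously, and composing with the compact embedding $Y\hookrightarrow X$ yields the desired compactness. The main obstacle is essentially bookkeeping: one must verify term-by-term which class each kernel in (\ref{eq:eq_1})--(\ref{eq:eq_4}) belongs to, and check that the pointwise multiplications by smooth factors (such as the components of $\n(\x)$ coming from cross and dot products with $\n$) do not spoil the order-$-1$ mapping property in Class A, which is immediate because $\n\in C^\infty(\p D)^3$.
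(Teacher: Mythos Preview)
Your overall strategy---show that $\Mm$ maps $X$ continuously into the higher-regularity space $Y$ and then invoke compact Sobolev embedding---is exactly the paper's approach, carried out term by term with the same target regularities (tangential output in $H^2(\p D)^3$, normal output in $H^1(\p D)$).

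There is, however, a genuine error in your Class~B analysis. The kernel $G_+-G_-$ is \emph{not} a $C^\infty$ function of $(\x,\y)$: expanding the exponentials gives
\[
G_+(\x,\y)-G_-(\x,\y)=\frac{i(k_+-k_-)}{4\pi}+\frac{(ik_+)^2-(ik_-)^2}{8\pi}\,|\x-\y|+O(|\x-\y|^2),
\]
and the term linear in $|\x-\y|$ is only Lipschitz at the diagonal, since $\gradx|\x-\y|=(\x-\y)/|\x-\y|$ is bounded but discontinuous there. Thus the Class~B operators are not smoothing of infinite order. What rescues the argument---and what the paper actually uses---is the weaker but sufficient fact that
\[
\gradx(G_+-G_-)=\Bigl(\tfrac{(ik_+)^2-(ik_-)^2}{8\pi}+O(|\x-\y|)\Bigr)\gradx|\x-\y|
\]
is a \emph{bounded} kernel; on the two-dimensional surface $\p D$ this yields a pseudodifferential operator of order $-2$, hence maps $L^2(\p D)\to H^2(\p D)$. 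That is precisely the mapping property you need for the term $\int_{\p D}[\gradx(G_+-G_-)\times\n(\x)]\,(\e\cdot\n)(\y)\,ds(\y)$ in (\ref{eq:eq_1}) to land in $H^2(\p D)^3$ from $\e\cdot\n\in L^2(\p D)$. Once you replace ``$C^\infty$, infinitely smoothing'' by ``bounded kernel, order $-2$'' for Class~B, the rest of your bookkeeping goes through unchanged.
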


\begin{proof}
Recall that $\Mm$ was defined through the system (\ref{eq:eq_1})--(\ref{eq:eq_4}).
We first examine the terms from  (\ref{eq:eq_1}),
\bea
 \int_{\partial D} \left[\epsilon^+ \frac{\partial G_+}{\partial \n(\x)}
- \epsilon^- \frac{\partial G_-}{\partial \n(\x)}\right]  (\e \times \n)(\y)\; ds(\y), \nonumber  \\
 \int_{\partial D} \left[\gradx (\epsilon^+ G_+ -\epsilon^- G_-)\right]
 (\e \times \n)(\y)  \cdot [\n(\x) - \n(\y)] \; ds(\y), \nonumber \\
 \i \omega     \int_{\partial D} \left[\epsilon^+ \mu^+ G_+ - \epsilon^- \mu^- G_-\right]
 (\h \times \n)(\y) \times \n(\x) ds(\y) .
\eea
It is clear from integral operator theory~\cite{colton:integral, coltoninverse, nedlec:book} that if 
$(\e,\h) \in X$ then
each of these three terms is in 
$H^2(\p D)^3$,  and the associated linear operators are bounded. That is, the terms
depend continuously on the $H^1$ norms of 
  $\e \times \n$ and $\h \times \n$. 
Next we examine the term
\bean \label{term}
\int_{\partial D} 
\left[\gradx (G_+ - G_-) \times \n(\x)\right]   (\e \cdot \n)(\y)\;    ds(\y)
\eean
in  (\ref{eq:eq_1}).
Using the Taylor series of the exponential,
\begin{displaymath}
\gradx (G_+ - G_-) = \left( \frac{ (i  k_-)^2 
- (i  k_+)^2 }{8 \pi} + O( | \bx - \by | ) \right) \gradx  | \bx - \by |,
\end{displaymath}
from integral operator theory~\cite{colton:integral, coltoninverse, nedlec:book} 
the term (\ref{term}) is in $H^2(\p D)^3 $ if $(\e,\h) \in X$ and
depends continuously on the $L^2$ norm of 
  $\e \cdot \n$.

We now write the terms  in (\ref{eq:eq_2}) involved in defining $\Mm$,
\bea
\int_{\partial D} \left[\mu^+G_+-\mu^-G_-\right] 
(\h\times \n)(\y)  \cdot \n(\x)\; ds(\y) ,\\ 
 \int_{\partial D} \left[\frac{ \partial G_+}{\partial \n(\x)} - \frac{\epsilon^+}{\epsilon^-} \frac{ \partial G_-	}{\partial \n(\x)}\right]
(\e\cdot \n)(\y) \;ds(\y), \nonumber \\
 \int_{\partial D}   \left[\gradx (G_+ -G_-) \times \n(\x)\right] \cdot (\e \times \n)(\y) \; ds(\y).\nonumber 
\eea
It is clear that each of these terms is in $H^1(\p D)$ and that they
depend continuously
on the $H^1(\p D)^3 $ norms of $\e \times \n$ and $\h \times \n$ and 
 on the $L^2$ norm of 
 $\e \cdot \n$.
 
The analysis of the terms in ~(\ref{eq:eq_3}) and~(\ref{eq:eq_4}) is similar.
 \end{proof}

\begin{prop} \label{J is compact}
The linear operator $\Jm: X \to X$ is compact.
\end{prop}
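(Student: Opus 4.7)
The approach rests on the observation that every output of $\Jm$ is purely normal: each component of $\Jm(\e,\h)$ has the form $\n(\x) s(\x)$ for some scalar field $s$. Consequently the tangential parts vanish identically, and in the $X$-norm only the $L^2(\p D)$ norms of the two scalars
\[
s_1 = \omega^2 \epsilon^+ \bS(\e \cdot \n) + i\omega\, \bD \h, \qquad s_2 = \omega^2 \mu^+ \bS(\h \cdot \n) - i\omega\, \bD \e
\]
enter $\|\Jm(\e,\h)\|_X^2$. I plan to show that each map $(\e,\h) \mapsto s_j$ factors as a continuous map from $X$ into $H^1(\p D)$ followed by the compact Rellich embedding $H^1(\p D) \hookrightarrow L^2(\p D)$; compactness of $\Jm: X \to X$ will then follow.

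The $\bS$ contributions are immediate from Proposition \ref{prop S}: since the $X$-norm controls $\e \cdot \n$ and $\h \cdot \n$ in $L^2(\p D)$, both $\bS(\e \cdot \n)$ and $\bS(\h \cdot \n)$ belong to $H^1(\p D)$ with continuous dependence. The more delicate step is the handling of $\bD \h$ and $\bD \e$, whose raw kernels $\gradx(1/|\x-\y|) \times \n(\y)$ have Calder\'on--Zygmund order zero and are not manifestly smoothing. The plan is to exchange $\gradx$ for $-\grady$ in the definition of $\bD$, use that cross product with $\n(\y)$ annihilates the normal component of $\grady(1/|\x-\y|)$, apply the scalar triple product identity to rewrite the integrand as a surface gradient (in $\y$) of $1/|\x-\y|$ paired with the tangential field $\n(\y) \times \w(\y)$, and invoke the surface divergence theorem on the closed surface $\p D$. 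The expected outcome is the identity
\[
\bD \w = \bS\bigl(\divs(\n \times \w)\bigr),
\]
valid whenever $\w \times \n \in H^1(\p D)^3$.

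With this identity in hand the argument closes: for $(\e,\h) \in X$ the tangential field $\n \times \h = -\h \times \n$ lies in $H^1(\p D)^3$, so $\divs(\n \times \h) \in L^2(\p D)$, and a second application of Proposition \ref{prop S} places $\bD \h$ in $H^1(\p D)$; an identical argument handles $\bD \e$. Composing with the compact embedding $H^1(\p D) \hookrightarrow L^2(\p D)$ and recalling that the tangential components of $\Jm(\e,\h)$ vanish yields compactness of $\Jm : X \to X$. The main obstacle is justifying the integration-by-parts identity for $\bD$: this relies on the smoothness of $\p D$ assumed in the paper (so that $\n$ and the surface differential operators are well-defined) and on $\p D$ being a closed surface (so that no boundary terms appear in the surface Stokes/divergence identity).
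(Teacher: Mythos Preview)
Your proposal is correct and follows essentially the same strategy as the paper: both arguments observe that $\Jm$ has purely normal output, show that the two scalar components land continuously in $H^1(\p D)$, and then invoke the compact embedding $H^1(\p D)\hookrightarrow L^2(\p D)$.

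The one point of difference is how the $H^1$ bound for $\bD\h$ is obtained. The paper simply asserts that $\bD$ depends only on the tangential part of its argument and maps $H^1$ tangential fields to $H^1(\p D)$, appealing implicitly to standard mapping properties of boundary integral operators on smooth surfaces. You instead derive the identity $\bD\w=\bS\bigl(\divs(\n\times\w)\bigr)$ via the surface divergence theorem and then use Proposition~\ref{prop S}. Your route is more self-contained, since it reduces everything to the already-stated properties of $\bS$; in fact it is exactly the integration-by-parts step the paper itself uses in the lemma immediately preceding (the equivalence of the constraints with~(\ref{new cs 1})--(\ref{new cs 2})) and again in the proof of Theorem~\ref{maintheorem}. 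So the two arguments are really the same idea expressed at different levels of detail.
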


\begin{proof}
We see from definition~(\ref{Ddef}) that $\bD \w$ depends 
on $\w$ only through its tangential part. 
Thus since $\h \times \n \in H^1(\p D)^3$, 
$\bD \h \in H^1(\p D)$ and depends continuously 
on the $H^1(\p D)^3$
norm of $\h \times \n$.
$\bS(\e \cdot \n) $ is also in $H^1(\p D)$ and it depends continuously on the $L^2(\p D)$
norm of $\e \cdot \n$.

A similar argument applies to the second component of $\Jm$.
Thus $\Jm: X \to X$ is compact.
\end{proof}

Let us  now write a reduced system in $\e \cdot \n,  \, \h \cdot \n$  derived 
from (\ref{new eq}) where we set $\e \times \n = \h \times \n =0 $,
\bea
(\e \cdot \n)(\x) 
+  \frac{2 \epsilon^-}{\epsilon^++\epsilon^-}  
 \int_{\partial D} \left[\frac{ \partial G_+}{\partial \n(\x)}  - \frac{\epsilon^+}{\epsilon^-} \frac{ \partial G_-	}{\partial \n(\x)}\right]
(\e\cdot \n)(\y) \;ds(\y) 
+ \xi \omega^2 \epsilon^+\bS (\e \cdot \n)
=f, \label{new reduced1} \\
(\h \cdot \n)(\x)   
+\frac{2 \mu^-}{\mu^+ + \mu^-} 
  \int_{\partial D} \left[\frac{ \partial G_+}{\partial \n(\x)} 
- \frac{\mu^+}{\mu^-} \frac{ \partial G_-}{\partial \n(\x)}\right]
(\h\cdot \n)(\y) \;ds(\y) +  \xi \omega^2 \mu^+ \bS (\h \cdot \n)
= g  \label{new reduced2},
\eea
for two given forcing terms $f$ and $g$.
It is helpful to write this system in operator form as
\bean
(\e \cdot \n ) + \bK_1 (\e \cdot \n ) + \xi \omega^2 \epsilon^+\bS (\e \cdot \n) = f, 
\label{opform e dot n}\\
(\h \cdot \n ) + \bK_2 (\h \cdot \n ) + \xi \omega^2 \mu^+\bS (\h \cdot \n) = g,
\label{opform h dot n}
\eean
where
\begin{align*}
  \bK_1 w(\x) &  = \frac{2 \epsilon^-}{\epsilon^++\epsilon^-}  
 \int_{\partial D} \left[\frac{ \partial G_+}{\partial \n(\x)}  - \frac{\epsilon^+}{\epsilon^-} \frac{ \partial G_-	}{\partial \n(\x)}\right]
 w(\y) \;ds(\y),\\
 \bK_2 w(\x) & = \frac{2 \mu^-}{\mu^+ + \mu^-} 
  \int_{\partial D} \left[\frac{ \partial G_+}{\partial \n(\x)} 
- \frac{\mu^+}{\mu^-} \frac{ \partial G_-}{\partial \n(\x)}\right]
w(\y) \;ds(\y).
\end{align*}

\begin{prop}
  There is a $\xi_0 > 0$ such that for all $\xi > \xi_0$,
  equations~(\ref{opform e dot n}) and~(\ref{opform h dot n})
  are well posed for $f, g \in L^2(\p D)$.
  In addition, there is a constant $C$ such that for all $\xi > \xi_0$,
\bea
\| \e \cdot \n \|_{L^2{(\p D)}} \leq C \| f \|_{L^2{(\p D})} , \q
\| \h \cdot \n \|_{L^2{(\p D)}}\leq C \| g \|_{L^2{(\p D)}} .
\eea
\end{prop}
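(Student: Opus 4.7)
The plan is to treat each of the scalar equations (\ref{opform e dot n}) and (\ref{opform h dot n}) separately, since they are decoupled in $\e\cdot\n$ and $\h\cdot\n$, and to exploit the self-adjointness and coercivity of $\bS$ to dominate the compact perturbation $\bK_i$ for large $\xi$. First I would observe that, by classical potential theory on the smooth surface $\p D$, the operators $\bK_1,\bK_2:L^2(\p D)\to L^2(\p D)$ are compact (each kernel $\partial_{\n(\x)} G_\pm$ is weakly singular), and that by Proposition~\ref{prop S} the single layer $\bS:L^2(\p D)\to L^2(\p D)$ is compact as well (it factors through $H^1(\p D)\hookrightarrow L^2(\p D)$), self-adjoint and coercive, i.e.\ there is $\alpha>0$ with $\langle \bS u,u\rangle_{L^2(\p D)}\ge \alpha\|u\|_{L^2(\p D)}^2$. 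Thus each of (\ref{opform e dot n}), (\ref{opform h dot n}) is an equation of the form $(I+T_\xi)u=h$ on $L^2(\p D)$ with $T_\xi$ compact, so the Fredholm alternative applies and it suffices to prove a uniform coercive estimate, which will simultaneously yield injectivity (hence invertibility by Fredholm) and the claimed bound.

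Next I would test the first equation against $\e\cdot\n$ in $L^2(\p D)$, write $c_1=\epsilon^+>0$, $u=\e\cdot\n$, and take real parts:
\begin{equation*}
\mathrm{Re}\,\langle f,u\rangle \;=\; \|u\|^2 \;+\; \mathrm{Re}\,\langle \bK_1 u,u\rangle \;+\; \xi\omega^2 c_1\,\langle \bS u,u\rangle,
\end{equation*}
where I used that $\langle \bS u,u\rangle\in\RR$ by self-adjointness. Using $\mathrm{Re}\,\langle \bK_1 u,u\rangle \ge -\|\bK_1\|_{L^2\to L^2}\,\|u\|^2$ and the coercivity of $\bS$, this gives
\begin{equation*}
\mathrm{Re}\,\langle f,u\rangle \;\ge\; \bigl(1-\|\bK_1\|_{L^2\to L^2}+\xi\omega^2 c_1\alpha\bigr)\,\|u\|^2 .
\end{equation*}

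I would then choose $\xi_0>0$ large enough that for all $\xi>\xi_0$ the coefficient in parentheses is bounded below by, say, $1$, for both $i=1,2$ (taking the maximum of the two required thresholds, using the corresponding constants $c_2=\mu^+$ and $\|\bK_2\|$). Cauchy--Schwarz then yields $\|u\|_{L^2(\p D)}\le \|f\|_{L^2(\p D)}$, uniformly in $\xi>\xi_0$, which gives both injectivity (set $f=0$) and the quantitative bound. Invertibility follows from the Fredholm alternative, so the constant $C$ in the statement can be taken equal to $1$ (or, more generally, $C=1/(1-\|\bK_i\|+\xi_0\omega^2 c_i\alpha)$). The same argument verbatim handles (\ref{opform h dot n}) with $\bK_2$ and $c_2=\mu^+$ in place of $\bK_1$ and $c_1$.

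The only nontrivial point is the $L^2$-compactness of $\bK_1$ and $\bK_2$. I do not expect any real obstacle here: on a smooth closed surface these are adjoints of the standard acoustic double-layer operators (indeed differences of such operators for $k_+$ and $k_-$), whose $L^2$-compactness is classical and can be invoked as in the references~\cite{colton:integral,coltoninverse,nedlec:book} already cited in the proof of Proposition~\ref{M is compact}. Everything else is a one-line coercivity computation.
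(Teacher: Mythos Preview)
Your argument has a genuine gap. You claim that Proposition~\ref{prop S} gives $\langle \bS u,u\rangle_{L^2(\p D)}\ge \alpha\|u\|_{L^2(\p D)}^2$ for some fixed $\alpha>0$, and you use this to absorb the term $-\|\bK_1\|\,\|u\|^2$ by taking $\xi$ large. But this inequality cannot hold: you yourself observe that $\bS:L^2(\p D)\to L^2(\p D)$ is compact (it factors through $H^1(\p D)$), and a compact self-adjoint operator on an infinite-dimensional Hilbert space has eigenvalues accumulating at $0$, so no uniform lower bound $\langle \bS u,u\rangle\ge\alpha\|u\|^2$ exists. The word ``coercive'' in Proposition~\ref{prop S} must be read as strict positivity $\langle \bS u,u\rangle>0$ for $u\neq 0$ (equivalently, injectivity together with non-negativity), not as $L^2$-coercivity in the usual sense; indeed, the classical estimate is $\langle \bS u,u\rangle\gtrsim\|u\|_{H^{-1/2}}^2$, which does not control $\|u\|_{L^2}^2$. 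Consequently your lower bound $1-\|\bK_1\|+\xi\omega^2 c_1\alpha$ is illusory, and no value of $\xi$ repairs it.

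The paper handles exactly this difficulty via Theorem~\ref{norm inv lemma} in the appendix, which is the missing idea. With $J=\omega^2\epsilon^+\bS$ compact, injective, self-adjoint and strictly positive, one diagonalises $J$ and splits $H=L^2(\p D)$ into $\mathrm{ran}\,P_m$ (the span of the first $m$ eigenvectors) and its orthogonal complement. On the finite-dimensional piece one does have $\langle JP_mx,P_mx\rangle\ge C_m\|P_mx\|^2$, so the $\xi$-term dominates $\|\bK_1\|$ there for $\xi$ large; on the complement one uses compactness of $M=\bK_1$ to make $\|Q_m\bK_1\|<\tfrac14$ by choosing $m$ large, so the identity alone already controls that block. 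Combining the two yields $\mathrm{Re}\,\langle(I+\bK_1+\xi J)u,u\rangle\ge\tfrac12\|u\|^2$ for all $\xi\ge\xi_0$, uniformly, which is precisely the estimate you were aiming for. In short, your testing strategy is right, but you need to exploit compactness of $\bK_1$ (not just boundedness) together with positivity of $\bS$ via this projection argument.
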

\begin{proof}
The operator $\bK_1 :L^2{(\p D)} \to L^2{(\p D)}$ is compact\cite{colton:integral, coltoninverse, nedlec:book}.
Since $\bS$ satisfies the properties stated in Proposition  \ref{prop S},  we can apply Theorem \ref{norm inv lemma}.
A similar argument establishes well posedness of~(\ref{opform h dot n})
and the second
inequality.
\end{proof}


\begin{thm} \label{maintheorem}
The linear operator $\Ima + \Mm + \xi \Jm: X \to X$ 
is invertible for all $\xi > \xi_0$, and also
for all $\xi \in \CC \setminus Y$ where $Y$ is a discrete
   set.
The corresponding inverse operator is uniformly bounded for all $\xi>\xi_0$.
\end{thm}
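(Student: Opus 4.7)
The plan is to put the operator into a Fredholm framework and then verify injectivity. By Propositions \ref{M is compact} and \ref{J is compact}, $\Mm$ and $\Jm$ are compact on $X$, so $\Ima+\Mm+\xi\Jm$ is Fredholm of index zero for every $\xi\in\CC$; invertibility therefore reduces to injectivity, and the inverse is automatically continuous. Once a single $\xi$ of invertibility is exhibited, Proposition \ref{prop discrete} immediately gives the discrete exceptional set $Y$ in the second claim of the theorem.

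For injectivity at $\xi>\xi_0$, I would decompose any candidate $(\e,\h)\in X$ into its tangential and normal parts. Since $\Jm$, as defined in (\ref{defJ}), takes values in the normal subspace only, projecting the homogeneous equation onto the normal direction yields precisely the reduced system (\ref{opform e dot n})--(\ref{opform h dot n}), with right-hand sides built from the tangential parts of $(\e,\h)$ through the off-diagonal blocks of $\Mm$ and the $\bD$ term in $\Jm$. The Proposition preceding the theorem — whose proof uses Theorem \ref{norm inv lemma} and the coercive self-adjointness of $\bS$ from Proposition \ref{prop S} — inverts the scalar normal-block operators $I+\bK_1+\xi\omega^2\epsilon^+\bS$ and $I+\bK_2+\xi\omega^2\mu^+\bS$ uniformly in $\xi>\xi_0$; a sharpening of that coercivity argument in fact yields an $O(\xi^{-1})$ decay of their inverses in operator norm. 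Substituting the resulting expressions for $\e\cdot\n,\h\cdot\n$ back into the tangential projection of the equation produces an identity-plus-compact Schur-complement equation on the tangential subspace, in which the $\xi\bD$ coupling survives as a finite bounded operator in the limit $\xi\to\infty$.

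The injectivity of this Schur-complement equation then follows from the uniqueness of the augmented constrained system \cite[Theorem~5.4]{ganesh2014all}: by the Lemma immediately preceding (\ref{defJ}), $\Jm(\e,\h)=0$ is equivalent to $J_1(\e,\h)=J_2(\e,\h)=0$, so $\ker(\Ima+\Mm)\cap\ker(\Jm)=\{\zero\}$, and any nontrivial kernel element of the Schur complement would lift to a nontrivial element of this joint kernel. The uniform inverse bound for $\xi>\xi_0$ is cleanest to extract by compactness and contradiction: if it failed along some $\xi_n>\xi_0$ with $\|(\e_n,\h_n)\|_X=1$ and $(\Ima+\Mm+\xi_n\Jm)(\e_n,\h_n)\to\zero$, the compactness of $\Mm$ and $\Jm$ would yield a subsequential limit lying in the trivial joint kernel, a contradiction.

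The main obstacle is managing the delicate cancellation $\xi\cdot O(\xi^{-1})=O(1)$ between the factor of $\xi$ multiplying $\bD$ in the normal block and the decay of the normal-block inverse. This cancellation is what keeps the tangential Schur-complement limit operator in the form identity plus compact rather than singular; it is morally the content of the abstract appendix Theorem \ref{all symmetric}, which derives large-$\xi$ invertibility from compactness of $\Mm,\Jm$, triviality of the joint kernel, self-adjoint non-negativity of the coercive $\bS$-block of $\Jm$, and invariance of $N(\Jm)$ under $\Mm$ — even though $\Jm$ as a whole is not self-adjoint on $X$.
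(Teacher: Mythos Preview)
Your plan has a genuine gap at its core: the claim that the normal-block inverses $(I+\bK_j+\xi\omega^2 c_j\bS)^{-1}$ decay like $O(\xi^{-1})$ in operator norm is false. Theorem~\ref{norm inv lemma} gives only a uniform bound (by~2), and no sharpening of its coercivity argument can do better, since $\bS$ is compact on $L^2(\p D)$: for eigenvectors of $\bS$ with small eigenvalue the operator looks like $I+\bK_j$ no matter how large $\xi$ is. Without that decay, the $\xi\bD$ coupling you feed back into the tangential Schur complement is \emph{not} $O(1)$, and your limit operator is not identity-plus-compact. Relatedly, your injectivity step is a non sequitur: a nonzero kernel element of $\Ima+\Mm+\xi\Jm$ at a fixed $\xi$ lifts to a kernel element of the full operator, \emph{not} to an element of $\ker(\Ima+\Mm)\cap\ker(\Jm)$, so the constrained-uniqueness result from~\cite{ganesh2014all} does not apply. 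The appeal to Theorem~\ref{all symmetric} also fails, since $\Jm$ is not self-adjoint on $X$ (the $\bD$ cross terms obstruct this) and you have not verified that $N(\Jm)$ is $\Mm$-invariant.

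The paper's proof avoids the Schur complement entirely. Instead, from the normal component of the homogeneous equation and the invertibility of $\bS:H^{-1}\to L^2$ (Proposition~\ref{prop S}) one reads off directly that $\divs(\h_p\times\n)+i\omega\epsilon^+\e_p\cdot\n$ is $O(\xi_p^{-1})$ in $H^{-1}$. One then \emph{rewrites} the tangential components of the equation by replacing $\e_p\cdot\n$ with $-(i\omega\epsilon^+)^{-1}\divs(\h_p\times\n)$ (and similarly for $\h_p\cdot\n$), at the cost of an $O(\xi_p^{-1})$ right-hand side; the resulting left-hand side is exactly M\"uller's well-posed tangential system, so $\e_p\times\n,\h_p\times\n=O(\xi_p^{-1})$ in $H^1$. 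A second pass through the normal equation, pairing against $\e_p\cdot\n$ and using the lower bound of Theorem~\ref{norm inv lemma}, then forces $\e_p\cdot\n,\h_p\cdot\n\to 0$ in $L^2$. The crucial idea you are missing is this M\"uller rewriting: it is what converts the superficially unbounded $\xi$-coupling into a small forcing term.
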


\begin{proof}
To prove that $\Ima + \Mm + \xi \Jm$  is invertible for all $\xi$ sufficiently large, 
since $\Mm $ and $\Jm$ are compact operators,
we can argue by contradiction by 
assuming that there is a sequence $(\xi_p) \in \RR$
with $\xi_p \to \infty$ as $p \to \infty$,
and a sequence 
$(\e_p, \h_p) \in X$ such that
\bean
\|\e_p\times \n\|_{H^1(\p D)^3}^2 +  \|\h_p \times \n \|_{H^1(\p D)^3}^2 +   \|\e_p \cdot \n
	\|_{L^2(\p D)}^2 + \|\h_p \cdot \n\|_{L^2(\p D)}^2 =1, \label{norm1}
\eean
and
 \bean 
(\Ima + \Mm + \xi_p\Jm) (\e_p, \h_p) =0. \label{p eq}
 \eean
We first write the equation for $(\e_p, \h_p)$ corresponding to (\ref{p eq}) in the first normal component,
\bean
(\e _p\cdot \n)(\x) 
-   \frac{2 \epsilon^-}{\epsilon^++\epsilon^-}  \Big \{ - \i \omega  \int_{\partial D} \left[\mu^+G_+-\mu^-G_-\right] 
(\h_p \times \n)(\y)  \cdot \n(\x)\; ds(\y) \nonumber \\ - 
 \int_{\partial D} \left[\frac{ \partial G_+}{\partial \n(\x)} - \frac{\epsilon^+}{\epsilon^-} \frac{ \partial G_-	}{\partial \n(\x)}\right]
(\e_p\cdot \n)(\y) \;ds(\y) \nonumber \\
+ \int_{\partial D}   \left[\gradx (G_+ -G_-) \times \n(\x)\right] \cdot (\e_p \times \n)(\y) \; ds(\y) \nonumber \Big\}\\
+ \xi_p \omega \bS (\omega \epsilon^+ \e_p \cdot \bn -i \divs (\h_p \times \n)  )
=0, \label{hom eq in e dot n}
\eean
where Green's theorem was used to introduce the term $\divs (\h_p \times \n)$.
It follows from Proposition \ref{prop S} that
\bean
 \|   \divs (\h_p \times \n) +  i \omega \epsilon^+ \e_p \cdot  \n
 \|_{H^{-1}(\p D)} = O( \xi_p^{-1}) . \label{max cond is small}
\eean 
We now rearrange terms in the row of equation (\ref{p eq}) corresponding in the first tangential 
components to obtain
\begin{align}
(\e \times \n)(\x) 
-  \frac{2}{\epsilon^++\epsilon^-} \Big\{
- \int_{\partial D} \left[\epsilon^+ \frac{\partial G_+}{\partial \n(\x)}
- \epsilon^- \frac{\partial G_-}{\partial \n(\x)}\right]  (\e \times \n)(\y)\; ds(\y) \nonumber  \\
+ \int_{\partial D} \left[\gradx (\epsilon^+ G_+ -\epsilon^- G_-)\right]
 (\e \times \n)(\y)  \cdot [\n(\x) - \n(\y)] \; ds(\y) \nonumber \\
- \i \omega     \int_{\partial D} \left[\epsilon^+ \mu^+ G_+ - \epsilon^- \mu^- G_-\right]
 (\h \times \n)(\y) \times \n(\x) \;ds(\y) \nonumber \\
-  \f{i}{\omega}  \int_{\partial D} 
\left[\gradx (G_+ - G_-) \times \n(\x)\right]   (\divs \h \times \n)(\y)\;    ds(\y) \Big \} \nonumber  \\
=   \frac{2}{\epsilon^++\epsilon^-} \f{i}{\omega}  \int_{\partial D} 
\left[\gradx (G_+ - G_-) \times \n(\x)\right]   (\divs \h \times \n +  i \omega \epsilon^+ \e_p \cdot  \n)(\y)\;    ds(\y), 
\label{neweq_1} 
\end{align}
which is $O(\xi_p^{-1})$ in $H^{1}$ norm thanks to the smoothing property of  $\gradx (G_+ - G_-) $ mentioned in the proof
of Proposition \ref{M is compact}.  
Similar considerations will lead to 
\begin{align}
(\h \times \n)(\x)  
- \frac{2}{\mu^+ + \mu^-} 
\Big \{   - \int_{\partial D} \left[\mu^+ \frac{\partial G_+}{\partial \n(\x)} 
- \mu^- \frac{\partial G_-}{\partial \n(\x)}\right ] (\h \times \n)(\y)\; ds(\y) \nonumber  \\
+ \int_{\partial D} \left[\gradx (\mu^+ G_+ -\mu^- G_-)\right]
 (\h \times \n)(\y)) \cdot [\n(\x) - \n(\y)] \; ds(\y) \nonumber \\
+ \i \omega     \int_{\partial D} \left[\epsilon^+ \mu^+ G_+ - \epsilon^- \mu^- G_-\right]
 (\e \times \n)(\y)\times \n(\x) \; ds(\y) \nonumber \\
+  \f{i}{\omega}  \int_{\partial D} 
\left[\gradx (G_+ - G_-) \times \n(\x)\right]  (\divs \e \times \n)(\y)\;    ds(\y) \Big \} 
=   O(\xi_p^{-1}), 
\label{neweq_3} 
\end{align}
in $H^{1}$ norm.
We now recognize that the left hand side of (\ref{neweq_1}-\ref{neweq_3}) 
is in the standard M\"uller equation form  for  the unknown
$(\e_p \times \n, \h_p \times \n)$. That equation is well posed for all $\omega >0$,  
and all values  of $\epsilon^\pm$ and
$\mu^\pm$ considered in this paper,  as proved in \cite{muller:book}, chapter VI.
We have thus found that $\e_p \times \n$ and $\h_p \times \n$ are $O(\xi_p^{-1})$,
in $H^{1}$ norm.
It follows that $\divs (\h_p \times \n)$ is $O(\xi_p^{-1})$,
in $L^2$ norm, and because of   (\ref{max cond is small}), 
$\e_p \cdot  \n$ is  $O(\xi_p^{-1})$, in $H^{-1}$ norm.\\
Taking the inner product of the terms in equation (\ref{hom eq in e dot n}) with 
$\e_p \cdot \n $ on
$\p D$ yields
\bean
\langle\e_p \cdot \n, (I + \bK_1 + \xi \omega^2 \epsilon^+ \bS) \e_p \cdot \n \rangle = \nonumber \\
   \frac{2 \epsilon^-}{\epsilon^++\epsilon^-}  \langle \e_p \cdot \n,  - \i \omega  \int_{\partial D} \left[\mu^+G_+-\mu^-G_-\right] 
(\h_p \times \n)(\y)  \cdot \n(\x)\; ds(\y) \rangle \nonumber \\
+    \frac{2 \epsilon^-}{\epsilon^++\epsilon^-}
\langle \e_p \cdot \n,  \int_{\partial D}   \left[\gradx (G_+ -G_-) \times \n(\x)\right] \cdot (\e_p \times \n)(\y) \; ds(\y) \rangle \nonumber \\
 - i\xi_p \omega \langle \e_p \cdot \n, \bS \divs (\h_p \times \n) \rangle
\label{for estimate}, 
\eean
where $\bK_1$ is as in~(\ref{opform e dot n}).
The first two terms of the right hand side of (\ref{for estimate}) are clearly of order
$O(\xi_p^{-1})$ in  $H^1$ norm since $\e_p \times \n$ and $\h_p \times \n$ are $O(\xi_p^{-1})$
in  $H^{1}$ norm. For the third term, using that $\bS$ is self adjoint, 
\bea
\xi_p \omega \langle \e_p \cdot \n, \bS \divs (\h_p \times \n) \rangle =
\xi_p \omega \langle \bS (\e_p \cdot \n),  \divs (\h_p \times \n) \rangle,
\eea
and  since
$\e_p \cdot \n $ is $O(\xi_p^{-1})$ in  $H^{-1}$ norm
we have $\bS ( \e_p \cdot \n)$
is $O(\xi_p^{-1})$ in  $L^2$ norm.
Finally, since $\divs (\h_p \times \n) $ is $O(\xi_p^{-1})$ in  $L^2$ norm
we deduce that
$\xi_p \omega \langle \bS (\e_p \cdot \n),  \divs (\h_p \times \n) \rangle$ is of order
$O(\xi_p^{-1})$. In summary, the right hand side of~(\ref{for estimate}) is of order
$O(\xi_p^{-1})$.
However, Theorem \ref{norm inv lemma} establishes that
the real part of the left hand side
of~(\ref{for estimate}) is bounded below by
$\f12 \| \e_p \cdot \n \|_{L^2(\p D)}^2$
for all $p$ sufficiently large, so that
$\e_p \cdot \n$ is of order $O(\xi_p^{-1/2})$ in the $L^2$ norm.
We have thus proved that $(\e, \h)$ converges strongly to zero in $X$, which
contradicts (\ref{norm1}). \\
To show the uniform bound for the operator $(\Ima + \Mm + \xi \Jm)^{-1} $
for all $\xi$ sufficiently large,
we can repeat the same argument by contradiction
by assuming (\ref{norm1}) and in place of (\ref{p eq}) we assume
\bea
\lim_{p \ri \infty}(\Ima + \Mm + \xi_p\Jm) (\e_p, \h_p) =0.
\eea
Finally, Proposition \ref{prop discrete}
proves that $\Ima + \Mm + \xi \Jm$ 
is invertible 
for all $\xi \in \CC \setminus Y$ where $Y$ is a discrete set.
\end{proof}

We now remark that
in practice the forcing term in equation
(\ref{new eq}) is  smooth since it is derived from an incident field. 
 It is  thus useful to study  the  regularity properties of equation (\ref{new eq}).
To that effect we define for $s>0$ the functional space
\bea
 X^s =\{ (\e ,\h): \e\times \n,  \h \times \n \, \in H^{s+1}(\p D)^3,  \e\cdot \n,  
\h \cdot \n \, \in H^s(\p D)
\}
\eea
with the norm $\| \,  \|_{X^s}$ defined by 
\bea
\| (\e,\h) \|_{X^s}^2 = 
  \|\e\times \n\|_{H^{s+1}(\p D)^3}^2 +  \|\h \times \n \|_{H^{s+1}(\p D)^3}^2 +   \|\e\cdot \n
	\|_{H^s(\p D)}^2 + \|\h \cdot \n\|_{H^s(\p D)}^2.
\eea
Identical arguments to those used in the proofs of Propositions~\ref{M is compact} and~\ref{J is compact}
can be repeated to show that
$\Mm, \Jm: X^s \to X^s$  are compact. In fact
 $\Ima + \Mm + \xi \Jm: X^s \to X^s$ is invertible with continuous inverse 
for the same values of $\xi$ as in the statement of Theorem~\ref{maintheorem}.
These results are summarized in the next theorem.

\begin{thm}
Suppose that $(\e^i, \h^i) \in X^s$.
Then for all $\xi \geq \xi_0$, or
$\xi \in \CC \setminus Y$ for some discrete set $Y$,
equation~(\ref{new eq}) is uniquely solvable. 
The solution $(\e,\h) \in X^s$, and $\| (\e,\h) \|_{X^s}$ 
is linearly bounded by
$\|(\e^i, \h^i)\|_{X^s}$.
The bound is uniform for all $\xi$ such that  $\xi \ge \xi_0$.
\end{thm}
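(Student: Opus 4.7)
The plan is to lift the proof of Theorem~\ref{maintheorem} from the base space $X$ to the higher regularity space $X^s$ in three stages.

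First, I would verify that $\Mm, \Jm : X^s \to X^s$ are compact by repeating the term-by-term analysis used in Propositions~\ref{M is compact} and~\ref{J is compact}. Each integral operator appearing in the definition of $\Mm$ through \eqref{eq:eq_1}--\eqref{eq:eq_4} has a kernel of the form $G_\pm$, $\partial_{\n} G_\pm$, or $\gradx G_\pm$ (gaining one Sobolev order), or $\gradx(G_+ - G_-)$ (gaining two Sobolev orders by the leading-order cancellation $\gradx(G_+-G_-) = O(|\x-\y|)\gradx|\x-\y|$ already exploited in the proof of Proposition~\ref{M is compact}). Hence every contribution maps $X^s$ continuously into a space strictly smoother than $X^s$, and Rellich's embedding gives compactness. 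The same reasoning applies to the components of $\Jm$, since $\bS$ and $\bD$ are both order $-1$ smoothing operators on $\p D$.

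Second, unique solvability for $\xi \in \CC \setminus Y$ (and in particular for $\xi \ge \xi_0$) is a direct consequence of the Fredholm alternative combined with Theorem~\ref{maintheorem}. The operator $\Ima + \Mm + \xi \Jm$ on $X^s$ is of the form identity plus compact, hence Fredholm of index zero. Any kernel element in $X^s$ lies in $X$ through the continuous embedding $X^s \hookrightarrow X$, and by Theorem~\ref{maintheorem} the kernel in $X$ is trivial for these values of $\xi$. The operator is therefore bijective on $X^s$, and the open mapping theorem delivers the linear bound $\|(\e,\h)\|_{X^s} \le C(\xi)\,\|(\e^i,\h^i)\|_{X^s}$.

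Third, for uniformity of the bound when $\xi \ge \xi_0$, I would argue by contradiction along exactly the same lines as in Theorem~\ref{maintheorem}, only now tracking $H^{s+1}$ norms of the tangential components and $H^s$ norms of the normal components. Assuming a sequence $\xi_p \to \infty$ with $\|(\e_p,\h_p)\|_{X^s} = 1$ and $(\Ima + \Mm + \xi_p\Jm)(\e_p,\h_p) \to 0$ in $X^s$, the steps would be: (i) deduce from the scalar equation analogous to \eqref{hom eq in e dot n} that $\divs(\h_p \times \n) + i\omega\epsilon^+\e_p \cdot \n = O(\xi_p^{-1})$ in $H^{s-1}(\p D)$; (ii) recognize the tangential equations as the Müller system with an $O(\xi_p^{-1})$ perturbation in $H^{s+1}(\p D)^3$, and invoke well-posedness of Müller's system in the higher Sobolev scale to conclude that $\e_p \times \n, \h_p \times \n = O(\xi_p^{-1})$ in $H^{s+1}$; (iii) test the normal-component equations against $\e_p \cdot \n$ and $\h_p \cdot \n$ and exploit coercivity of $\bS$ (as in the use of Theorem~\ref{norm inv lemma}) to conclude $\e_p \cdot \n, \h_p \cdot \n = O(\xi_p^{-1/2})$, contradicting unit $X^s$-norm.

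The main obstacle I expect is step (iii): Theorem~\ref{norm inv lemma} was invoked in $L^2(\p D)$, but here the testing must take place in $H^s(\p D)$. Either I would reduce to the $L^2$ case by applying $(I - \Delta_{\p D})^{s/2}$ to the normal-component equation and commuting through $\bK_1, \bK_2, \bS$ modulo smoothing remainders (which remain compact and do not disturb the coercivity), or I would prove a direct $H^s$ coercivity estimate for $\bS$ by duality and interpolation. Either route yields the required lower bound on the real part of the test pairing and closes the contradiction. The other two steps are purely a matter of replicating the estimates of Theorem~\ref{maintheorem} at one higher level of regularity and rely only on the smoothing properties already used in the compactness proofs.
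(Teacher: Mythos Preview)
Your proposal is correct and matches the paper's approach, which in fact consists of a single sentence before the theorem statement asserting that the arguments of Propositions~\ref{M is compact}, \ref{J is compact} and Theorem~\ref{maintheorem} carry over verbatim to $X^s$. Your three-stage plan is exactly this lift, worked out in more detail than the paper provides; the Fredholm-plus-embedding shortcut you use in stage two (kernel in $X^s$ embeds into the trivial kernel in $X$) is a mild economy over re-running the full contradiction argument, and the obstacle you flag in stage three is real but, as you note, resolvable by conjugating with $(I-\Delta_{\partial D})^{s/2}$ and absorbing the commutators into the compact part.
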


\section{Appendix} \label{append}
\begin{lem} \label{proj}
Let 
$H$ be a separable Hilbert space and
$\{ e_j \, : \, j \in \NN\}$ be a Hilbert basis of $H$.
Let $P_m$
be the orthogonal projection onto $\mathrm{span} \{e_1, ..., e_m\}$,
and $Q_m = I - P_m$. 
Let  $K: H \to H$ be a  linear  compact operator.
Then $Q_m K$ 
 and $K Q_m$ converge to zero in operator norm, as $m \ri \infty$.
\end{lem}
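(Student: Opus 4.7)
The plan is to prove the two norm convergences separately, and then obtain the second from the first by taking adjoints. The key observation is that $Q_m \to 0$ pointwise (that is, $\|Q_m y\| \to 0$ for every fixed $y \in H$, since $\|Q_m y\|^2 = \sum_{j > m} |\langle y, e_j\rangle|^2$ is the tail of a convergent series), but $Q_m$ does \emph{not} converge to zero in operator norm whenever $H$ is infinite-dimensional (each $Q_m$ has norm $1$). Compactness of $K$ is what upgrades pointwise convergence on the image to uniform convergence on the image of the unit ball.

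For the claim that $\|Q_m K\| \to 0$, I would argue as follows. Let $B = \{x \in H : \|x\| \le 1\}$ be the closed unit ball. Since $K$ is compact, the set $K(B)$ is relatively compact in $H$. Fix $\varepsilon > 0$ and choose a finite $\varepsilon$-net $y_1, \dots, y_N \in K(B)$ for $K(B)$. Since $\|Q_m y_i\| \to 0$ as $m \to \infty$ for each $i$, there exists $M$ such that $\|Q_m y_i\| < \varepsilon$ for all $m \ge M$ and all $i = 1, \dots, N$. Then for any $x \in B$, pick $y_i$ with $\|Kx - y_i\| < \varepsilon$; using that $\|Q_m\| \le 1$,
\begin{equation*}
\|Q_m K x\| \le \|Q_m (Kx - y_i)\| + \|Q_m y_i\| \le \|Kx - y_i\| + \varepsilon < 2\varepsilon.
\end{equation*}
Taking supremum over $x \in B$ gives $\|Q_m K\| \le 2\varepsilon$ for $m \ge M$, which proves $\|Q_m K\| \to 0$.

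For the claim that $\|K Q_m\| \to 0$, I would take adjoints. Since $P_m$ is an orthogonal projection, $Q_m = I - P_m$ is self-adjoint, so $(K Q_m)^* = Q_m^* K^* = Q_m K^*$. The adjoint $K^*$ of a compact operator is compact, so by the first part applied to $K^*$ we have $\|Q_m K^*\| \to 0$. Since $\|A^*\| = \|A\|$ for any bounded linear operator $A$, it follows that $\|K Q_m\| = \|Q_m K^*\| \to 0$.

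There is no serious obstacle here; the only subtle point is to resist the temptation to argue directly from $Q_m \to 0$ strongly, since that would not yield operator norm convergence without the compactness hypothesis. The finite $\varepsilon$-net extracted from precompactness of $K(B)$ is exactly the mechanism that converts pointwise decay of $Q_m$ into uniform decay on $K(B)$.
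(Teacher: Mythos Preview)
Your proof is correct and follows essentially the same strategy as the paper's: compactness of $K$ together with the strong convergence $Q_m \to 0$ handles $\|Q_m K\| \to 0$, and then the adjoint argument (using $Q_m^* = Q_m$ and compactness of $K^*$) gives $\|K Q_m\| \to 0$. The only cosmetic difference is that you argue directly via a finite $\varepsilon$-net for the precompact set $K(B)$, while the paper reaches the same conclusion by contradiction using sequential compactness.
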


\begin{proof}
Arguing by contradiction, suppose that $Q_m K$ does not converge to zero in operator norm.
Then there is a positive $\alpha$ and a sequence $(x_m) \in H$ such that $\| x_m \| =1$
and $ \| Q_m K x_m \| \geq \alpha$.
Since $K$ is compact, there
is  a subsequence
  $(K x_{m_p})$ of $(K x_m)$ which converges to $K x$, for some $x$ in $H$.
Clearly, $ Q_{m_p} K x$ converges to zero. 
Now
\bea
 \|Q_{m_p}  K x_{m_p} \| \leq \| Q_{m_p} K x \| + \| Q_{m_p}  (K x_{m_p}  -K x) \|
\leq  \| Q_{m_p} K x \| + \|  K x_{m_p}  -K x \|.
\eea 
Thus $Q_{m_p}  K x_{m_p} $ converges to zero as $m_p \to \infty$, which is
a contradiction.\\
Note that
$Q_m K^* = Q^*_m K^* = (K Q_m)^*$,
using $Q^*_m = Q_m$.
Since in the result above $K$ is arbitrary,
we deduce that $Q_m K^*$ also converges to zero.
\end{proof}

\begin{thm}\label{norm inv lemma}
Let 
$H$ be a separable Hilbert space and
$M,J: H \to H$ be two linear and compact operators.
Assume that 
$J$ is  injective, self adjoint, and positive. 
Then  there is a positive $\xi_0$ such that $I+ M + \xi J$   is invertible for all $\xi>\xi_0$
and $\| (I +M + \xi J)^{-1} \| $ is uniformly bounded. 
In addition $\xi_0$ can be chosen such that for all $\xi>\xi_0$ 
and $x$ in $H$, 
\bea  \mbox{Re } \langle (I +M + \xi J)x, x \rangle \geq \ds \f12 \| x\|^2. \eea
\end{thm}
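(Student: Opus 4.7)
The plan is to first establish the coercivity estimate $\mbox{Re } \langle (I + M + \xi J)x, x\rangle \geq \f12 \| x\|^2$, and then deduce invertibility and the uniform bound on $(I + M + \xi J)^{-1}$ as immediate consequences. Since $J$ is compact, self-adjoint, nonnegative, and injective, the spectral theorem produces an orthonormal basis $\{e_j\}$ of $H$ consisting of eigenvectors of $J$ with strictly positive eigenvalues $\lambda_j$, ordered so that $\lambda_1 \geq \lambda_2 \geq \ldots$ and $\lambda_j \to 0$. Let $P_m$ be the orthogonal projection onto $\mathrm{span}\{e_1, \ldots, e_m\}$ and $Q_m = I - P_m$. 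The crucial observation is that $\langle J x, x \rangle = \sum_j \lambda_j |\langle x, e_j\rangle|^2 \geq \lambda_m \|P_m x\|^2$, so multiplying $J$ by $\xi$ boosts the control on the $P_m$-component of $x$, while Lemma~\ref{proj} will take care of the $Q_m$-component via $\|M Q_m\| \to 0$.

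Decomposing $\langle M x, x\rangle = \langle M P_m x, x\rangle + \langle M Q_m x, x\rangle$ and applying Cauchy-Schwarz gives
\begin{equation*}
|\mbox{Re } \langle M x, x\rangle| \leq \|M\| \, \|P_m x\| \, \|x\| + \|M Q_m\| \, \|x\|^2.
\end{equation*}
Combining this with the lower bound for $\xi \langle Jx, x\rangle$ above, for any $x$ with $\|x\| = 1$ and $t := \|P_m x\| \in [0, 1]$ I obtain
\begin{equation*}
\mbox{Re } \langle (I + M + \xi J) x, x\rangle \geq 1 - \|M Q_m\| - \|M\| \, t + \xi \lambda_m \, t^2.
\end{equation*}
Minimizing the right-hand side in $t$ yields the lower bound $1 - \|M Q_m\| - \|M\|^2/(4 \xi \lambda_m)$. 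By Lemma~\ref{proj} I can first pick $m$ large enough so that $\|M Q_m\| < 1/4$; then, fixing this $m$ and taking $\xi_0 > \|M\|^2 / \lambda_m$, the lower bound exceeds $1/2$ for every $\xi > \xi_0$, giving the announced coercivity estimate by homogeneity.

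From $\mbox{Re } \langle (I + M + \xi J)x, x\rangle \geq \f12 \|x\|^2$, Cauchy-Schwarz forces $\|(I + M + \xi J)x\| \geq \f12 \|x\|$, so the operator is injective with closed range. Since $M + \xi J$ is compact, $I + M + \xi J$ is Fredholm of index zero, and therefore injectivity implies surjectivity; the inverse exists and has operator norm at most $2$, uniformly in $\xi > \xi_0$. The main subtlety is that the eigenvalues of $J$ accumulate at zero, so $\xi J$ by itself cannot uniformly dominate the perturbation $M$ on all of $H$; the truncation argument exploiting $\|M Q_m\| \to 0$ is essential and is the one place where compactness of $M$ (and not merely boundedness) is genuinely used.
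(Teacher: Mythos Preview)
Your proof is correct and follows essentially the same strategy as the paper's: spectral truncation of $J$ via the projections $P_m$, $Q_m$, use of Lemma~\ref{proj} to make the tail contribution of $M$ small, and a quadratic estimate on the $P_m$-component yielding the coercivity bound, from which invertibility with $\|(I+M+\xi J)^{-1}\|\le 2$ follows. The only cosmetic differences are that the paper splits $\langle Mx,x\rangle$ using $\|Q_m M\|$ rather than $\|M Q_m\|$, applies Young's inequality instead of explicitly minimizing the quadratic in $t$, and cites Lax--Milgram rather than the Fredholm alternative for surjectivity.
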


\begin{proof}
Let $\lambda_1 \geq \lambda_2 \geq ... \lambda_j \geq ... >0$
be the eigenvalues of $J$. Let $e_1, ..., e_j, ...$ be  associated eigenvectors with  norm 1.
They form an orthonormal Hilbert basis of $H$, since $J $ is injective. \\
Let $P_m$ be the orthonormal projection on  span $\{ e_1, ..., e_m \}$ and $Q_m = I - P_m$.
By Lemma \ref{proj}, we can pick $m$ large enough such that $ \| Q_m M \| < \f14$. 
For $\xi >0$ we write
\bea
\mbox{Re} \langle  (I+ M +  \xi J )  x,  x \rangle = t_1 + t_2+ t_3 + t_4 ,
\eea 
with
\bea
t_1 &= & \| x\|^2, \\ 
        t_2 &=  &\xi \mbox{Re }\langle P_m J  x , P_m x \rangle +\mbox{Re } \langle P_m M x,  P_m x \rangle ,\\
				t_3 &=&  \xi \mbox{Re} \langle Q_m J  x , Q_m x \rangle, \\
				t_4 &=& \mbox{Re} \langle Q_m M  x,  Q_m  x \rangle. 
\eea
As $J$ commutes with $P_m$ and $Q_m$
\bea
 t_2 &=& \xi  \langle J  P_m x , P_m x \rangle + \mbox{Re } \langle P_m M  x,  P_m  x \rangle ,\\
				t_3 &=& \xi  \langle JQ_m x , Q_m x \rangle.
\eea
As $J$ is injective, self-adjoint, coercive, and the range of $P_m $ is finite dimensional, there is a positive $C$ such that 
$\langle J  P_m x , P_m x \rangle \geq C \| P_m x\|^2$.
It follows that
\bea
t_2 &\geq &\xi  C \| P_m x\|^2 - \| M \| \| x \| \| P_m x\| \\
&\geq & \xi  C \| P_m  x\|^2 - \f12 \f{1}{\beta}\| M \|^2 \| P_m x\|^2  - \f12 \beta \| x \| ^2,
\eea
for any $\beta >0$.
We note that $t_3 \geq 0$ and $t_4 \geq - \f14\|  x\|^2$.
We  set $\beta = \f12 $, $\xi_0 = \f{\| M \|^2}{C}$ to find, for any $\xi  \geq \xi_0$  
\bea
\mbox{Re } \langle (I+ M + \xi  J ) x,  x \rangle  \geq \f12 \| x\|^2.
\eea
By the Lax Milgram theorem, this estimate shows that $(I+ M + \xi  J )$ is invertible for all
 $\xi  \geq \xi_0$.
As $ \mbox{Re} \langle (I+ M + \xi  J )(I+ M + \xi  J)^{-1} x,  (I+ M + \xi J)^{-1} x \rangle  $
is greater or equal than $\ds \f12  \| (I+ M + \xi  J)^{-1} x \|^2$  and less or equal than 
$\| x \| \, \| (I+ M + \xi  J)^{-1} x\|$
for all $x$, it follows that the norm of $(I+ M + \xi  J)^{-1}$ is less or equal than 2.
\end{proof}

\begin{prop}\label{isometry lemma}
Let 
$H$ be a separable Hilbert space 
and $M,J: H \to H$ be two linear and compact operators.
Assume that 
$J$ is  injective, self-adjoint, and positive.  
Then  $I+ M + \xi J$   is invertible, with continuous inverse, except possibly for $\xi$ in a discrete set of $\CC$.
\end{prop}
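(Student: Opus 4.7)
The plan is to combine the existence of \emph{some} invertible $\xi$ (provided by Theorem \ref{norm inv lemma}) with the dichotomy for operator pencils stated in Proposition \ref{prop discrete}.

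First I would invoke Theorem \ref{norm inv lemma}: since $M$ and $J$ are compact with $J$ injective, self-adjoint, and positive, that theorem produces a $\xi_0 > 0$ such that $I + M + \xi_0 J$ is invertible with continuous inverse on $H$. In particular, the pencil $\xi \mapsto I + M + \xi J$ is \emph{not} singular for every $\xi \in \CC$.

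Next I would apply Proposition \ref{prop discrete}, which asserts that for any two compact operators $M$ and $J$ on a separable Hilbert space $H$, the pencil $I + M + \xi J$ is either singular for all $\xi \in \CC$ or invertible for $\xi \in \CC \setminus Z$ with $Z \subset \CC$ discrete. The previous step rules out the first alternative, so we are in the second: there is a discrete set $Y \subset \CC$ (with $\xi_0 \notin Y$) such that $I + M + \xi J$ is invertible for every $\xi \in \CC \setminus Y$. Continuity of the inverse on $\CC \setminus Y$ follows from the open mapping theorem, since the operator is bounded and bijective on the Hilbert space $H$.

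There is essentially no obstacle beyond verifying the hypotheses of the two inputs: both have already been proved earlier in the paper and apply directly under the stated assumptions. The only subtlety worth underlining is that positivity and self-adjointness of $J$ are used solely to obtain a single regular point $\xi_0$ via Theorem \ref{norm inv lemma}; once that point is in hand, the meromorphic/analytic Fredholm structure underlying Proposition \ref{prop discrete} takes over and produces the discrete exceptional set $Y$.
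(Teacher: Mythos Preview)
Your proposal is correct and matches the paper's own proof, which simply states that the result follows from Proposition~\ref{prop discrete} and Theorem~\ref{norm inv lemma}. You have filled in exactly the intended logic: Theorem~\ref{norm inv lemma} supplies a regular point of the pencil, and Proposition~\ref{prop discrete} then yields the discrete exceptional set.
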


\begin{proof}
The result follows from Proposition \ref{prop discrete} and Theorem \ref{norm inv lemma}. 
\end{proof}

\begin{thm} \label{all symmetric} 
Let $H$ be a separable Hilbert space, 
and $M,J : H \to H$ be two  linear  compact operators.
Assume $J$ is self-adjoint and non-negative,
 $N(J)$ is invariant under $M$, and the intersection of $N(J)$ and $N(I+M)$ is trivial.
Then $I + M  + \xi J $ is invertible for all real $\xi > \xi_0$, for some constant $\xi_0$ and for 
all $\xi \in \CC \setminus Y $ for some  discrete subset $Y \subseteq \CC$.
\end{thm}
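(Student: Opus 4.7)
The plan is to reduce to Theorem~\ref{norm inv lemma} via an orthogonal block decomposition of $H$ determined by $J$. Since $J$ is bounded, $N(J)$ is closed, so $H = N(J) \oplus N(J)^\perp$; and because $J$ is self-adjoint one has $\overline{R(J)} = N(J)^\perp$, so $J$ annihilates $N(J)$ and leaves $N(J)^\perp$ invariant. Let $P$ be the orthogonal projection onto $N(J)$ and $Q = I - P$. The restriction $J_1 := QJQ$, viewed as an operator on the separable Hilbert space $N(J)^\perp$, is injective (since $N(J) \cap N(J)^\perp = \{0\}$), self-adjoint, positive (non-negativity together with injectivity of a self-adjoint operator gives strict positivity), and compact.

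Next I exploit the hypothesis $M(N(J)) \subseteq N(J)$ to put $I + M + \xi J$ in block-triangular form. Writing $x = x_0 + x_1$ with $x_0 = Px$ and $x_1 = Qx$, and using $Jx_0 = 0$, $Mx_0 \in N(J)$, and $Jx_1 \in N(J)^\perp$, a direct computation shows that $(I + M + \xi J)x = y$ is equivalent to
\begin{align*}
(I + M_0) x_0 + PMQ\, x_1 &= Py, \\
(I + QMQ + \xi J_1)\, x_1 &= Qy,
\end{align*}
where $M_0 := M|_{N(J)}$. The invariance hypothesis is precisely what kills the $(2,1)$ block, so invertibility of $I + M + \xi J$ on $H$ reduces to invertibility of each diagonal block on its respective subspace.

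The top block is handled by the Fredholm alternative: $M_0$ is compact as an endomorphism of $N(J)$, and the trivial-intersection hypothesis yields $N(I + M_0) = N(J) \cap N(I+M) = \{0\}$, so $I + M_0$ is invertible on $N(J)$ independently of $\xi$. The bottom block is handled by Theorem~\ref{norm inv lemma} applied to the pair $(QMQ, J_1)$ on $N(J)^\perp$, which supplies a threshold $\xi_0 > 0$ such that $I + QMQ + \xi J_1$ is invertible on $N(J)^\perp$ with uniformly bounded inverse for every real $\xi > \xi_0$. Back-substitution then recovers $x_0$ from $x_1$ and gives invertibility of $I + M + \xi J$ on $H$ for all real $\xi > \xi_0$. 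Having produced even one value of $\xi$ at which $I + M + \xi J$ is invertible, Proposition~\ref{prop discrete} extends invertibility to all $\xi \in \CC \setminus Y$ for a discrete set $Y$. The only genuinely delicate point is the bookkeeping verifying the block-triangular structure and the compactness of the restricted and compressed operators $M_0$ and $QMQ$; once that is set up correctly, the problem neatly decouples into a Fredholm piece on $N(J)$ and the already-proved Theorem~\ref{norm inv lemma} piece on $N(J)^\perp$.
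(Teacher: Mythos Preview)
Your proof is correct and follows essentially the same route as the paper's: both decompose $H$ via the orthogonal projection $P$ onto $N(J)$, obtain a block-triangular form (the $(2,1)$ block vanishes precisely because $N(J)$ is $M$-invariant), invoke the Fredholm alternative on $N(J)$ and Theorem~\ref{norm inv lemma} on $N(J)^\perp$, and then appeal to Proposition~\ref{prop discrete} for the discrete-exception statement. Your write-up is in fact more careful than the paper's about verifying the hypotheses of Theorem~\ref{norm inv lemma} for the compressed pair $(QMQ,\,J_1)$ and about the back-substitution step.
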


\begin{proof}
Let  $P$ denote the orthogonal projection on $N(J)$.
The operator   $I + M  + \xi J $ can be re-written in blocks
\bea
I + M  + \xi J = (I -P)(I + M  + \xi J) (I -P) & +
                             PM(I-P)   & + P(I + M  )P .
\eea
We note that due to the assumptions on $M$ and $P$, $ P(I +M)P$ is an invertible
operator from $N(J)$ to $N(J)$, with continuous inverse. 
Since we can apply Theorem \ref{norm inv lemma} to the block
$(I -P)(I + M  + \xi J) (I -P)$, the result is proved.
\end{proof}

\begin{prop} \label{injective counter example}
Let $H$ be a separable Hilbert space. There exist two compact linear operators $M,J: H \to H$
such that $J$ is injective and $I + M + \xi J$ is singular for all $\xi \in \CC$.
\end{prop}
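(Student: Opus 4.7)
The plan is to extend the three-dimensional counterexample already displayed in the excerpt by coupling the direction where $J$ vanishes (the vector $e_3$) into an auxiliary compact weighted shift. Since every separable Hilbert space is isometric to $\ell^2(\NN)$, I would work on $H = \CC^3 \oplus \ell^2(\NN)$ with orthonormal basis $\{e_1, e_2, e_3, f_1, f_2, \ldots\}$, the first factor carrying a copy of the finite-dimensional example and the second factor carrying the auxiliary shift used to restore injectivity of $J$.

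I would define $M$ to be the finite-rank operator acting as $Me_1 = -e_1 + e_3$, $Me_2 = -e_2 + e_3$, $Me_3 = -e_3 + e_1 + e_2$, and $Mf_n = 0$ for all $n$; then $(I+M)$ agrees with the $3 \times 3$ matrix from the paper's example on $\CC^3$ and with the identity on $\ell^2$, and $M$ is compact because it has rank three. For $J$ I would set $Je_1 = e_1$, $Je_2 = -e_2$, $Je_3 = f_1$, and $Jf_n = \mu_n f_{n+1}$ for $n \geq 1$, where $(\mu_n)$ is a strictly positive sequence tending to zero rapidly (for instance $\mu_n = 2^{-n}$). Then $J$ is a finite-rank perturbation of a compact weighted forward shift on $\ell^2$, hence compact. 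Injectivity of $J$ follows by writing a general element as $x = \alpha_1 e_1 + \alpha_2 e_2 + \alpha_3 e_3 + \sum_{n \geq 1} \beta_n f_n$ and inspecting $Jx = 0$ coefficient by coefficient: the $e_1, e_2$ entries force $\alpha_1 = \alpha_2 = 0$, the $f_1$ entry forces $\alpha_3 = 0$, and the $f_{k+1}$ entries force $\beta_k = 0$ for every $k \geq 1$.

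For each $\xi \in \CC$ I would exhibit a nonzero $v_\xi$ in $\ker(I + M + \xi J)$ of the form $v_\xi = e_1 - e_2 - \xi e_3 + \sum_{k \geq 1} a_k(\xi) f_k$. A direct computation shows that applying $I + M + \xi J$ to the truncated vector $e_1 - e_2 - \xi e_3$ reproduces the cancellation of the finite-dimensional example but now leaves a residual $-\xi^2 f_1$ because $J e_3 = f_1$ instead of $0$. Using $(I+M) f_k = f_k$ and $J f_k = \mu_k f_{k+1}$, the requirement that $v_\xi$ lie in the kernel becomes the recursion $a_1(\xi) = \xi^2$ and $a_k(\xi) = -\xi \mu_{k-1} a_{k-1}(\xi)$ for $k \geq 2$, whose unique solution is
\[
a_k(\xi) = (-1)^{k-1} \xi^{k+1} \mu_1 \mu_2 \cdots \mu_{k-1}.
\]
The main obstacle is showing that $v_\xi$ genuinely lies in $H$ for every $\xi \in \CC$ (not merely for small $\xi$), i.e.\ that $\sum_k |a_k(\xi)|^2 < \infty$ for all $\xi$. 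With $\mu_n = 2^{-n}$ the product $\mu_1 \cdots \mu_{k-1}$ equals $2^{-k(k-1)/2}$, so $|a_k(\xi)|^2 = |\xi|^{2k+2} \, 2^{-k(k-1)}$ and the ratio $|a_{k+1}(\xi)|^2 / |a_k(\xi)|^2 = |\xi|^2 \, 2^{-2k}$ tends to zero for every fixed $\xi$; the series is therefore summable throughout $\CC$. Since the $e_1$-component of $v_\xi$ equals $1$, $v_\xi \neq 0$, and hence $I + M + \xi J$ has nontrivial kernel for every $\xi \in \CC$, which is the claim of the proposition.
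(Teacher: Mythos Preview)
Your construction is correct: the compactness and injectivity checks are sound, the recursion for $a_k(\xi)$ is derived correctly, and the super-exponential decay of $\mu_1\cdots\mu_{k-1}=2^{-k(k-1)/2}$ does force $\sum_k|a_k(\xi)|^2<\infty$ for every $\xi\in\CC$, so $v_\xi$ is a genuine nonzero element of $H$ in the kernel of $I+M+\xi J$.

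The paper's proof uses the same underlying mechanism---a compact weighted forward shift with weights decaying fast enough that the recursively defined kernel vector lies in $\ell^2$ for \emph{all} $\xi$---but in a considerably leaner form. There the authors work directly on $\ell^2(\NN)$ with orthonormal basis $(e_k)$, take $M$ to be the rank-one operator $Me_1=-e_1$, $Me_k=0$ for $k\ge 2$, and let $J$ itself be the shift $Je_k=e_{k+1}/(k+1)$; the kernel vector is then simply $u=\sum_{k\ge 1}(-\xi)^{k-1}e_k/k!$. Your version instead starts from the $3\times 3$ pencil example displayed earlier in the paper and grafts an $\ell^2$ tail onto it, redirecting $Je_3$ into that tail to restore injectivity. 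This buys a transparent connection to the motivating finite-dimensional example, at the cost of carrying three extra basis vectors and a rank-three $M$ through the computation; the paper's version trades that motivation for minimality. Both arguments hinge on the same point you identified as the main obstacle: choosing the shift weights (factorial in the paper, $2^{-n}$ for you) so that the resulting power-series-like kernel vector has infinite radius of convergence in $\ell^2$.
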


\begin{proof}
Let $H$ be a separable Hilbert space over $\CC$ of infinite dimension with Hilbert basis
 $\{e_1, e_2, ... \}$. 
We can define a compact linear operator $M$ by setting $M e_1 = - e_1$, and $M e_ k =0$, for all $k \geq 2$.
 Now we define a compact linear operator $J$ by setting $J e_k = \ds \frac{e_{k+1}}{k+1} $ for all $k$. 
Since for all $x$ in $H$, 
%
$ \| Jx  \|^2  = \ds  \sum_{k=1}^{\infty} \frac{|\langle x , e_k \rangle|^2 } {(k+1)^2} $, 
 $J$ is injective. \\
Now let $\xi \in \CC$ and set
\begin{equation*}
u = \sum_{k=1}^{\infty} \frac{(-\xi)^{k-1} e_k}{k!}.
\end{equation*}
Note that  $\| u\|^2 \geq 1$.
Moreover,
\bea
(I + M) u  = \sum_{k=2}^{\infty} \frac{(-\xi)^{k-1} e_k}{k!} 
               = \sum_{k=1}^{\infty} (-1)^{k}\frac{\xi^{k}  e_{k+1}}{(k+1)!}.
\eea
and 
\bea
\xi Ju  = \xi \sum_{k=1}^{\infty} \frac{(-\xi)^{k-1} J  e_{k}}{k!} 
= - \sum_{k=1}^{\infty} (-1)^{k}\frac{\xi^{k}  e_{k+1}}{(k+1)!},
\eea
Thus $(I +M + \xi J)u = 0$, for all $\xi \in \CC$.
\end{proof}

\end{document}